\newcommand{\CM}{Cohen-Macaulay}
\newcommand{\CMS}{\operatorname{\underline{CM}}}
\newcommand{\by}{\mathbf{y}}
\newcommand{\bx}{\mathbf{x}}
\newcommand{\bg}{\mathbf{g}}
\newcommand{\n}{\mathfrak{n} }
\newcommand{\m}{\mathfrak{m} }
\newcommand{\D}{\mathcal{D} }
\newcommand{\C}{\mathcal{C} }
\newcommand{\Rc}{\mathcal{R} }
\newcommand{\Sc}{\mathcal{S} }
\newcommand{\Tc}{\mathcal{T} }
\newcommand{\Fc}{\mathcal{F} }
\newcommand{\Gc}{\mathcal{G} }
\newcommand{\F}{\mathbb{F} }
\newcommand{\G}{\mathbb{G} }
\newcommand{\rt}{\rightarrow}
\newcommand{\lra}{\longrightarrow}
\newcommand{\ov}{\overline}
\newcommand{\image}{\operatorname{image}}
\newcommand{\curv}{\operatorname{curv}}
\newcommand{\plex}{\operatorname{plex}}
\newcommand{\type}{\operatorname{type}}
\newcommand{\embdim}{\operatorname{embdim}}
\newcommand{\grade}{\operatorname{grade}}
\newcommand{\rank}{\operatorname{rank}}
\newcommand{\thick}{\operatorname{thick}}
\newcommand{\projdim}{\operatorname{projdim}}
\newcommand{\injdim}{\operatorname{injdim}}
\newcommand{\Syz}{\operatorname{Syz}}
\newcommand{\cone}{\operatorname{cone}}
\newcommand{\Hom}{\operatorname{Hom}}
\newcommand{\Ext}{\operatorname{Ext}}
\newcommand{\Tor}{\operatorname{Tor}}
\theoremstyle{plain}
\newtheorem{theorem}{Theorem}[section]
\newtheorem{corollary}[theorem]{Corollary}
\newtheorem{lemma}[theorem]{Lemma}
\theoremstyle{definition}
\newtheorem{definition}[theorem]{Definition}
\newtheorem{remark}[theorem]{Remark}
\newtheorem{observation}[theorem]{Observation}
\theoremstyle{remark}
\begin{document}

\title[trivial multiplication]{On certain DG-algebra resolutions}
\author{Tony~J.~Puthenpurakal}
\date{\today}
\address{Department of Mathematics, IIT Bombay, Powai, Mumbai 400 076}
\subjclass{Primary 13D07, 13D09 Secondary 13D02}
\keywords{DG-resolutions, Auslander-Reiten conjecture, Derived category, hypersurfaces}

\email{tputhen@math.iitb.ac.in}
 \begin{abstract}
 In this paper we give several classes of Non-Gorenstein local rings $A$ which satisfy the property that $\text{Ext}^i_A(M, A) = 0$ for $i \gg 0$ then $\text{projdim}_A M$ is finite.
 We also show that if $\text{injdim}_A M = \infty$ then over such rings the bass-numbers of $M$ (with respect to  $\mathfrak{m}$) are unbounded.
  When $A$ is a hypersurface ring we give an alternate proof of a result due to Takahashi regarding thick subcategories of the stable category of maximal Cohen-Macaulay $A$-modules. This result of Takahashi implies some results   due to Avramov, Buchweitez, Huneke and Wiegand.
The technique used to prove our results is that the minimal resolution of the relevant  rings have an appropriate DG-algebra structure (philosophically this technique is due to
Nasseh, Ono, and Yoshino).
\end{abstract}
 \maketitle
\section{introduction}
In this paper  unless stated otherwise all rings are commutative Noetherian and all modules considered are finitely generated. The Auslander–Reiten Conjecture (\cite{ARei}) transplanted to commutative algebra, asserts that every commutative Noetherian ring $A$  satisfies the Auslander–Reiten condition:

(ARC) If $M$ is an $A$-module with $\Ext_A^i(M, M\oplus A) = 0$ for all $i > 0$ then $M$ is a projective $A$-module.

It is immediate that it suffices to prove ARC for local rings.

 Auslander, Ding, and Solberg  established ARC for local complete intersections, see [2, Proposition 1.9]. There has been plenty of works establishing  ARC for several classes of rings, see
\cite[4.1]{HSV}, \cite[Main Theorem]{HL}, \cite[Corollary 4]{Ar}, \cite{KOT}, \cite{Ki} and\cite{Ku}.

 Our research is motivated by an observation by
Nasseh, Ono, and Yoshino in \cite[7.1]{NOY},  that if we have a DG (= differential graded) $\Rc$  resolution of $A$ over $Q$ (with $A = Q/I$) then we can transplant the assumptions of  ARC to the DG-algebra  $\Rc$ where more tools are available.

\textbf{I:} \emph{Trivial multiplication:} Let $(Q, \n)$ be a regular local ring and let $I \subseteq \n^2$ be non-zero ideal. Set $A = Q/I$.
Suppose $\Rc$, the minimal free $Q$-resolution of $A$, has a  DG-algebra structure. We first consider rings $A$ such that if we go mod a regular system of parameters of
$Q$ the resulting algebra $\Sc$ has trivial multiplication, i.e., $\Sc_+^2 = 0$. Then $A$ is Golod, \cite[5.2.4]{A}. In this case it is known that if $A$ is NOT Gorenstein and if  $\Ext^i_A(M, A) = 0$ for $i \gg 0$ then $\projdim_A M < \infty$, see \cite[3.5(2)]{AM}. We give an easy  proof of this result by proving a considerably  more general result.
\begin{definition}
 Suppose $\Rc$ the minimal $Q$-resolution of $A$ has a DG-algebra structure. Let the residue field of $Q$ be $k$.  We say $\Rc$ satisfies $(\dagger)$ if we go mod a regular system of parameters of $Q$ the resulting algebra $\Sc$ has the following property.

 There is a basis $B$ of $\Sc_+$ and two distinct elements $u, v $ of $B$ such that
 \begin{enumerate}
  \item $$(\Sc_+)^2 \subseteq \bigoplus_{\stackrel{p \in B}{p \neq u, v}} k p.$$
  \item $\Sc_+ u = \Sc_+ v = 0$.
 \end{enumerate}
\end{definition}
\begin{remark}
 Many complete NON Gorenstein rings $A$  with $A$ \CM \ and $\embdim A - \dim A = 3$, satisfy  $(\dagger)$. See \ref{ex-dagger}.
\end{remark}

Triviality of multiplication  has some highly non-trivial consequences. Our first result is:
\begin{theorem}\label{first}
Let $(Q, \n)$ be a regular local ring with residue field $k$ and let $I \subset \n^2$. Set $A = Q/I$. Suppose $\Rc$ the minimal $Q$-resolution of $A$ has a DG-algebra structure. Let the residue field of $Q$ be $k$.  Suppose $\Rc$ satisfies $(\dagger)$.  Let $M$ be an $A$-module with $\Ext_A^i(M, A) = 0$ for $ i \gg 0$. Then $\projdim_A M < \infty$.
\end{theorem}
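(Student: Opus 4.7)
The plan is to follow the DG-algebra strategy alluded to in the introduction, originally due in spirit to Nasseh, Ono and Yoshino: transport the vanishing hypothesis on $\Ext^*_A(M,A)$ to the finite-dimensional graded algebra $\Sc = \Rc\otimes_Q k$ obtained by reducing $\Rc$ modulo a regular system of parameters $\bx$ of $Q$, and then exploit the very rigid multiplicative structure forced by $(\dagger)$.

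After standard reductions (passing to the completion and replacing $M$ by a high syzygy), I may assume that $M$ is an $A$-module satisfying $\Ext^i_A(M,A) = 0$ for all $i>0$, and aim to show that $M$ is $A$-free. Let $\Fc \to M$ be the minimal $Q$-free resolution of $M$; since $\Rc$ has a DG-algebra structure, obstruction theory produces on $\Fc$ the structure of a minimal DG $\Rc$-module. Tensoring with $k$ over $Q$ yields $\ov{\Fc} := \Fc \otimes_Q k$, a graded DG $\Sc$-module whose differential vanishes by minimality; its underlying graded $k$-vector space is $\Tor^Q_*(M,k)$, and the $\Sc_+$-action is the multiplication inherited from $\Rc$. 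The minimal $A$-free resolution of $M$, and hence $\Ext^*_A(M,A)$, is built from $\ov{\Fc}$ together with this $\Sc_+$-action through a standard change-of-rings bar-type construction.

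The heart of the argument is then to extract a contradiction from $(\dagger)$. Because $\Sc_+ u = \Sc_+ v = 0$ and neither $u$ nor $v$ lies in $(\Sc_+)^2$, multiplication by $u$ (respectively by $v$) on $\ov{\Fc}$ produces a DG $\Sc$-module endomorphism that is not captured by any other multiplicative relation of $\Sc$. Translating these two independent operators into the change-of-rings construction, I expect them to yield two cocycles in $\Hom_A(F_\bullet, A)$, living in arbitrarily high (and unboundedly many) homological degrees, whenever $M$ is not free. Crucially, the fact that $(\dagger)$ provides two distinct such elements, each outside the image of multiplication, prevents a single homotopy from killing both simultaneously. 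The vanishing $\Ext^i_A(M,A) = 0$ for $i \gg 0$ then forces these cocycles to be boundaries, which back-translates into a collapse of $\ov{\Fc}$ in high degrees and ultimately into $\projdim_A M < \infty$.

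The main obstacle will be the precise bookkeeping in the last paragraph, namely showing that the multiplication operators by $u$ and $v$ on $\ov{\Fc}$ actually descend to non-trivial, cohomologically independent classes in $\Ext^*_A(M,A)$. This is where property $(\dagger)$(1) should be used in earnest: the condition that no product in $(\Sc_+)^2$ has a non-zero $u$- or $v$-coefficient is what guarantees that the $u$- and $v$-contributions cannot be cancelled by Massey-product corrections from the rest of $\Sc$. I anticipate that once this independence is nailed down, the vanishing hypothesis on $\Ext^*_A(M,A)$ forces $M$ to be $A$-free essentially by a syzygy-counting argument, and the theorem follows.
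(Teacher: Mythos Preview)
Your outline has a genuine gap at exactly the point you flag as ``the main obstacle'': you never explain a mechanism by which multiplication by $u$ and $v$ on $\ov{\Fc}=\Tor^Q_*(M,k)$ produces actual classes in $\Ext^*_A(M,A)$, let alone non-trivial ones. The sentence ``the fact that $(\dagger)$ provides two distinct such elements \dots\ prevents a single homotopy from killing both simultaneously'' is an assertion, not an argument; making it precise would require you to control the entire bar/Golod construction relating $\Sc$-module structure on $\ov{\Fc}$ to the minimal $A$-resolution of $M$, and nothing in your sketch does this. As written the proposal is a plan with its load-bearing step left to hope.

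The paper takes a different and much shorter route that avoids this problem entirely: it never tries to exhibit non-vanishing in $\Ext^*_A(M,A)$. Instead it transfers the hypothesis $\Ext^i_A(M,A)=0$ for $i\gg0$ through the equivalence $\D(A)\simeq\D(\Rc)$ to a semi-free $\Rc$-resolution $\F$ of $M$, then reduces modulo a regular system of parameters (Lemma~\ref{long-vanishing}) to obtain $\Ext^i_\Sc(\G,\Sc)=0$ for $i\gg0$ with $\G=\F/\bx\F$. The sole use of $(\dagger)$ is Observation~\ref{obs}: it says that $ku\oplus kv$ splits off $\Sc_+$ \emph{as a DG $\Sc$-module}, so $\Sc_+\cong k[-a]\oplus k[-b]\oplus W$. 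The short exact sequence $0\to\Sc_+\to\Sc\to k\to 0$ of DG $\Sc$-modules then yields, for $i\gg0$,
\[
\beta_i \;=\; \ell\bigl(\Ext^i_\Sc(\G,k)\bigr)\;\ge\;\beta_{i+a+1}+\beta_{i+b+1},
\]
where $\beta_i=\ell(\Ext^i_A(M,k))$. If $\projdim_A M=\infty$ every $\beta_i>0$, and the inequality forces an infinite strictly decreasing sequence of positive integers, a contradiction. So the argument works in the \emph{second} variable of $\Ext$ and is a pure length count; your first-variable cocycle construction is not needed.
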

This result also follows from a more general result in \cite[5.3]{AINS} and \cite[5.5]{AINS-2}. However our result is considerably easy to prove. The techniques in our proof can easily be generalized to prove Theorems \ref{third}, \ref{dagger-bass} and \ref{third-bass}.

\textbf{Hypersurfaces:} Now assume $A$ is a abstract hypersurface, i.e.,  $\widehat{A} = Q/(f)$ where $Q$ is regular local and $f \in \n^2$  (here $\widehat{A}$ is the completion of $A$ with respect to $\m$). In this case  we have the following result due to Huneke and Weigand, \cite[1.9]{HW} (also see \cite[1.1]{M}).
\begin{theorem}
\label{hw}Let $(A,\m)$ be a hypersurface ring  and let $M, N$ be $A$-modules.
The following conditions are equivalent:
\begin{enumerate}[\rm (i)]
  \item $\projdim_A M$ or $\projdim_A N$ is finite.
  \item $\Tor^A_n(M, N) = \Tor^A_{n+1}(M, N) = 0$  for some $n$.
  \item $\Tor^A_i(M, N) = 0$ for all $i \gg 0$.
\end{enumerate}
\end{theorem}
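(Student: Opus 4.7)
The plan is to prove $\mathrm{(i)} \Rightarrow \mathrm{(iii)} \Rightarrow \mathrm{(ii)} \Rightarrow \mathrm{(i)}$. The first two implications are immediate, since any $\Tor$ beyond the projective dimension vanishes and (iii) trivially provides two consecutive vanishings. All the content lies in $\mathrm{(ii)} \Rightarrow \mathrm{(i)}$, which I would carry out in three stages.

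First, reduce the setting. Completing preserves both hypothesis and conclusion under faithful flatness, so I may write $\widehat A = Q/(f)$. I would then replace $M$ and $N$ by sufficiently high syzygies over $A$, both of which are maximal Cohen-Macaulay since $A$ is Cohen-Macaulay; the two-step $\Tor$ vanishing transfers via the shift isomorphism $\Tor^A_i(\Syz^s M, N) \cong \Tor^A_{i+s}(M, N)$ for $i \geq 1$.

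Second, prove $\mathrm{(iii)}$ from $\mathrm{(ii)}$. By Eisenbud's theorem, an MCM module $M$ over $A = Q/(f)$ has a $2$-periodic minimal $A$-free resolution coming from a matrix factorization $\phi, \psi : Q^r \to Q^r$ with $\phi\psi = \psi\phi = f\cdot I_r$. Tensoring with $N$, the groups $\Tor^A_i(M,N)$ for $i \geq 1$ are the homology of a $2$-periodic complex on $N^r$ with alternating differentials $\bar\phi,\bar\psi$. Two consecutive vanishings then force exactness from that point on, and by periodicity $\Tor^A_i(M,N) = 0$ for all $i \geq 1$.

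Third, deduce $\mathrm{(i)}$. Here I would invoke the change-of-rings long exact sequence coming from the resolution $0 \to Q \xrightarrow{f} Q \to A \to 0$:
$$\cdots \to \Tor^A_{i+2}(M,N) \xrightarrow{\chi} \Tor^A_i(M,N) \to \Tor^Q_{i+1}(M,N) \to \Tor^A_{i+1}(M,N) \to \cdots,$$
where $\chi$ is the Eisenbud operator. Combined with $\projdim_Q M = \projdim_Q N = 1$ (Auslander-Buchsbaum applied to MCM modules over $A$, since their $Q$-depth equals $\dim Q - 1$), the sequence collapses to $\Tor^Q_1(M,N) \cong M \otimes_A N$, yielding the four-term exact sequence
$$0 \to M \otimes_A N \to N^r \xrightarrow{\bar\phi} N^r \to M \otimes_A N \to 0.$$
The main obstacle is to extract projective-dimension information from this sequence, since the naive rank count over $A$ is trivially satisfied. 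One must analyze torsion and length data, typically by localizing at the minimal primes of $A$ and applying the Huneke-Wiegand torsion/rigidity lemma, to force one of $M$, $N$ to be $A$-free. This delicate final step is the technical heart of the original proof and is where all the interaction between the hypersurface structure and the module theory is concentrated.
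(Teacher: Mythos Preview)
Your outline follows the classical Huneke--Wiegand/Miller route and is correct in spirit, but it diverges completely from what the paper does. The paper does \emph{not} give a direct proof of this theorem: it records it as a known result (attributed to \cite{HW} and \cite{M}), remarks that (ii)$\Rightarrow$(iii) uses Murthy's rigidity theorem \cite[1.6]{Mu} over the regular ring $Q$, and then offers an alternative proof strategy via thick subcategories. Concretely, the paper's contribution is a DG-algebra proof of Theorem~\ref{taka} (any thick subcategory of $\D^b(A)$ containing $A$ and some complex outside $\thick(A)$ must contain $k$), carried out over the Koszul DG-algebra $\Rc=K(h)$ in Section~6; Theorem~\ref{hw} then follows by Takahashi's argument \cite[7.2]{T}, since for fixed $M$ the class of $N$ with $\Tor^A_i(M,N)=0$ for $i\gg 0$ generates a thick subcategory containing $A$, and if it contains $k$ one reads off $\projdim_A M<\infty$. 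Your matrix-factorization/torsion approach and the paper's thick-subcategory approach are genuinely different proofs of the same fact; the paper's point is precisely to bypass the delicate torsion analysis you allude to.

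Two comments on your direct argument itself. First, your periodicity proof of (ii)$\Rightarrow$(iii) is fine (and arguably cleaner than invoking Murthy). Second, in your third step the identification $\Tor^Q_1(M,N)\cong M\otimes_A N$ is not correct as written: the left end of the four-term sequence should be $\Syz^1_A(M)\otimes_A N$, not $M\otimes_A N$. More importantly, you explicitly defer the decisive step---forcing one of $M,N$ to be free from the four-term sequence---to ``the Huneke--Wiegand torsion/rigidity lemma,'' so what you have is a roadmap to the original proof rather than an independent argument.
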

The assertion (ii) $\implies$ (iii) uses \cite[1.6]{Mu}.
The corresponding result for the Ext functor is due to Avramov and Buchweitz, (see \cite[5.12]{AB}).
\begin{theorem}\label{ab}
Let $(A,\m)$ be a hypersurface ring of dimension $d$ and let $M, N$ be $A$-modules.
The following conditions are equivalent:
\begin{enumerate}[\rm (i)]
  \item $\projdim_A M$ or $\projdim_A N$ is finite.
  \item $\Ext^n_A(M, N) = \Ext^{n+1} _A(M, N) = 0$  for some $n \geq d + 1$.
  \item $\Ext^i_A(M, N) = 0$ for all $i \gg 0$.
\end{enumerate}
\end{theorem}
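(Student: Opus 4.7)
The plan is to reduce everything to maximal Cohen-Macaulay modules, where Eisenbud's theory of matrix factorizations makes resolutions $2$-periodic, and then to invoke the theory of support varieties to handle the converse. The implication (iii) $\Rightarrow$ (ii) is trivial. For (i) $\Rightarrow$ (iii), note that a hypersurface is Gorenstein, so $\projdim_A N < \infty$ implies $\injdim_A N < \infty$ as well; either finiteness hypothesis forces $\Ext^i_A(M, N)$ to vanish for $i$ large.

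For (ii) $\Rightarrow$ (iii), pass to the $\m$-adic completion so that $A = Q/(f)$ with $(Q, \n)$ regular local and $f \in \n^2$, and replace $M, N$ by their $d$-th syzygies $M' = \Syz^d M$ and $N' = \Syz^d N$. Both are \CM, and since $n \geq d+1$ the hypothesis reads $\Ext^j_A(M', N') = \Ext^{j+1}_A(M', N') = 0$ for some $j \geq 1$. Eisenbud's theorem now enters: the minimal free $A$-resolution of any \CM \ module (after deleting free summands) is $2$-periodic, given by a matrix factorization $(\phi, \psi)$ of $f$ satisfying $\phi\psi = \psi\phi = f \cdot \mathrm{Id}$. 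Applying $\Hom_A(-, N')$ yields the isomorphism $\Ext^i_A(M', N') \cong \Ext^{i+2}_A(M', N')$ for every $i \geq 1$, so two consecutive vanishings propagate to all subsequent degrees and establish (iii).

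The main content is (iii) $\Rightarrow$ (i), which I would address via the Eisenbud cohomology operator $\chi$, of cohomological degree $2$, acting on
\[
\Ext^*_A(M, N) = \bigoplus_{i \geq 0} \Ext^i_A(M, N).
\]
This action makes $\Ext^*_A(M, N)$ a finitely generated graded module over the polynomial ring $A[\chi]$; tensoring with the residue field $k$ gives a finitely generated graded $k[\chi]$-module, whose support variety $V(M, N) \subseteq \mathbb{A}^1_k$ is, by (iii), the origin $\{0\}$. Because $A$ has codimension $1$ in $Q$, each individual support variety $V(M)$ is either all of $\mathbb{A}^1_k$ or the origin, and $V(M) = \{0\}$ is equivalent to $\projdim_A M < \infty$, and likewise for $N$.

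The principal obstacle is the tensor product formula $V(M, N) = V(M) \cap V(N)$, the technical heart of the Avramov--Buchweitz theory. It requires compatibly lifting the minimal $A$-resolutions of $M$ and $N$ to $Q$-free complexes so that the $\chi$-action can be read off simultaneously, and then comparing the resulting graded module structures. Once this identity is secured, $V(M) \cap V(N) = \{0\}$ forces one of $V(M)$ or $V(N)$ to be $\{0\}$, completing the proof.
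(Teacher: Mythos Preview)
Your argument is correct, but it is essentially the original Avramov--Buchweitz proof, not the one this paper develops. For (ii) $\Rightarrow$ (iii) the paper does not invoke Eisenbud's $2$-periodicity; it uses the change-of-rings long exact sequence of Lemma~\ref{lem:long-exact-seq-ext} (Corollary~\ref{asymp}): since $\projdim_Q M \leq d+1$, the vanishing of $\Ext^n_A$ and $\Ext^{n+1}_A$ for $n>d$ forces the connecting maps to become isomorphisms and propagates the vanishing. Your matrix-factorization route reaches the same conclusion and is equally short, though your reduction to $M',N'$ is slightly heavier than necessary (syzygying only $M$ already gives eventual $2$-periodicity of the resolution, and hence of $\Ext^i_A(M,N)$ for $i\geq d+1$; the identification $\Ext^n_A(M,N)\cong\Ext^n_A(M',N')$ you implicitly use does hold because $M'$ is MCM and $A$ is Gorenstein, but it deserves a sentence).

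The real divergence is in (iii) $\Rightarrow$ (i). You appeal to the support-variety machinery and the intersection formula $V(M,N)=V(M)\cap V(N)$, which is exactly what Avramov--Buchweitz prove. The present paper deliberately avoids that: its contribution is a DG-algebra proof of Theorem~\ref{taka} (that any thick subcategory of $\D^b(A)$ containing $A$ and a complex of infinite projective dimension must contain $k$), from which (iii) $\Rightarrow$ (i) follows by taking the thick subcategory generated by $A$ and $N$ and observing it cannot contain $k$ if $\Ext^i_A(M,N)=0$ for $i\gg 0$ while $\projdim_A M=\infty$. Your approach buys directness and stays within module theory; the paper's approach trades the analytic input (the intersection formula) for a categorical one, and its DG computation in the proof of Theorem~\ref{taka-body} is what makes the paper's treatment of the hypersurface case new.
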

We note that we may assume that $A$ is complete. The assertion (i) $\implies$ (ii) is trivial. The assertion (ii) $\implies$ (iii) follows from a standard change of rings spectral sequence, see \ref{asymp}.

We note that Takahashi gave an alternate proof of Theorems \ref{hw} and \ref{ab}, see \cite[7.2]{T}. The basic idea of his proof is as follows.  We first note that we can in both cases reduce to the case when $M, N$ are maximal \CM \ (= MCM) $A$-modules. Let  $\CMS(A)$ denote the stable category of MCM $A$-modules. Note that $\CMS(A)$ is a triangulated category, see \cite[4.7]{Buch}.  Takahashi  classified thick subcategories of $\CMS(A)$  (see  \cite[6.6]{T}) and from his classification it follows that  any non-zero thick subcategory of $\CMS(A)$ contains $\Syz_{\dim A}^A(k)$. As a consequence Theorems \ref{hw} and \ref{ab} follow.

Let $\D^b(A)$ be the bounded derived category of $A$ and let \\ $\ov{\D^b(A)} = \D^b(A)/\thick(A)$ denote the singularity category. By \cite[4.4.1]{Buch} we have a triangle equivalence $\CMS(A) \rt \ov{\D^b(A)}$. By the construction of this equivalence and Takahashi's classification of thick subcategories of $\CMS(A)$ we get:
\begin{theorem}
  \label{taka}Let $(A,\m)$ be a hypersurface ring. Let $\Fc$ be a thick subcategory of $\D^b(A)$ containing $A$. If $\Fc$ contains a complex $X$ with $X \notin \thick(A)$ then $k \in \Fc$.
\end{theorem}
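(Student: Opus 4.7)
The plan is to transport the problem into $\CMS(A)$ via Buchweitz's equivalence and then apply Takahashi's classification there. Let $\pi \colon \D^b(A) \rt \ov{\D^b(A)}$ be the canonical triangle functor, so $\ker(\pi) = \thick(A)$; since $A \in \Fc$ we automatically have $\thick(A) \sub \Fc$. I would first invoke the standard bijective correspondence between thick subcategories of $\D^b(A)$ containing $\thick(A)$ and thick subcategories of $\ov{\D^b(A)}$ (induced by $\pi$, with inverse $\pi^{-1}$). This associates to $\Fc$ a thick subcategory $\ov{\Fc} \sub \ov{\D^b(A)}$ with the property that $Z \in \Fc$ iff $\pi(Z) \in \ov{\Fc}$. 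The hypothesis $X \notin \thick(A)$ then guarantees $\pi(X) \neq 0$, so $\ov{\Fc}$ is non-zero.

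Next, I would transport $\ov{\Fc}$ across the Buchweitz equivalence $\CMS(A) \simeq \ov{\D^b(A)}$ to obtain a non-zero thick subcategory $\T$ of $\CMS(A)$. By Takahashi's classification (cited in the excerpt as \cite[6.6]{T}), any such non-zero thick subcategory must contain $\Syz_{\dim A}^A(k)$, which is MCM since $A$ is \CM. Tracing back through both the equivalence and the Verdier correspondence, this places $\Syz_{\dim A}^A(k)$ in $\Fc$.

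To reach $k$ itself, I would descend through the syzygies. For each $i \geq 0$ the short exact sequence
$0 \to \Syz_{i+1}^A(k) \to A^{b_i} \to \Syz_i^A(k) \to 0$
coming from a free resolution of $k$ yields a distinguished triangle in $\D^b(A)$. Since $A \in \Fc$ and $\Fc$ is thick, I can cascade from $\Syz_{i+1}^A(k) \in \Fc$ down to $\Syz_i^A(k) \in \Fc$; after $\dim A$ such steps I recover $k = \Syz_0^A(k) \in \Fc$.

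The main technical point I expect to have to handle carefully is the Verdier-quotient correspondence of thick subcategories (in particular that summands, which Takahashi's statement relies on, can be lifted through $\pi$); once that standard machinery is in place, the rest is a clean chase through Takahashi's theorem and the cone construction along the syzygy sequences.
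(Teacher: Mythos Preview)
Your proposal is correct and is exactly the derivation the paper sketches in the introduction: Theorem~\ref{taka} is stated there as a consequence of Buchweitz's equivalence $\CMS(A)\simeq\ov{\D^b(A)}$ together with Takahashi's classification of thick subcategories of $\CMS(A)$. However, the paper's own detailed proof (Section~6, Theorem~\ref{taka-body}) is an \emph{alternative} argument, restricted to the case where $A$ is a quotient of a regular local ring, that deliberately avoids invoking Takahashi's classification. It works DG-theoretically: pass to the Koszul DG-algebra $\Rc=K(h)$ over $Q$ and use the equivalence $\D(A)\simeq\D(\Rc)$; pick an MCM module $N\in\Fc$ of infinite projective dimension; reduce modulo a regular sequence to land over a DVR and then over the residue field; and finally exhibit $k[-1]$ explicitly as a DG direct summand of a two-term complex $\ov{K}$ lying in the image of $\Fc$. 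Your route is shorter, covers the abstract hypersurface case, and is conceptually clean, but it imports Takahashi's deep theorem as a black box; the paper's route is hands-on and self-contained, and is precisely the illustration of the DG-resolution philosophy that the paper is trying to showcase.
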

We give an alternative   proof of Theorem \ref{taka} when $A$ is a quotient of a regular local ring. This is sufficient to prove Theorems \ref{hw} and \ref{ab}, see \cite[7.2]{T}.

\textbf{III} \emph{DG resolutions over non-regular rings with trivial multiplication:}
There are plenty of examples of non-regular rings $Q$ such that the minimal $Q$-resolution $\Rc$ of $A = Q/I$ is finite and  has a DG algebra structure. Also there are a plenty of examples of $\Rc$ such that if we go mod some regular sequence $f_1, \ldots, f_c$   in $Q$ the resulting algebra $\ov{\Rc}$ has trivial multiplication (ie., $(\ov{\Rc}_+)^2 = 0$).  We then prove that homological properties of $Q/( f_1, \ldots, f_c)$ have a strong bearing on $A$. We show
\begin{theorem}\label{third}
  Let $(A,\m)$ be a local ring which is a quotient of a local ring $Q$. Assume:
  \begin{enumerate}[\rm (1)]
    \item $\projdim_Q A$ is finite and the minimal $Q$-resolution $\Rc$ of $A$ has a DG-alebra structure and that $\rank_Q  \Rc \geq 3$.
    \item There is a  regular sequence $f_1, \ldots, f_c$   in $Q$ such that  the resulting algebra $\ov{\Rc}$ has
    \begin{enumerate}[\rm (i)]
      \item zero differentials.
      \item  trivial multiplication (ie., $(\ov{\Rc}_+)^2 = 0$).
    \end{enumerate}
   \item Set $B = Q/( f_1, \ldots, f_c)$.  (Note as $\ov{\Rc}$ has zero differentials $B$ is infact an $A$-module). Suppose there exists a $B$-regular sequence $g_1, \ldots,g_r$ such that the maximal ideal $\n$ of $B/(g_1, \ldots,g_r)$ decomposes as $k^2\oplus\n^\prime$.
  \end{enumerate}
      If $M$ is an $A$-module such that $\Ext^n_A(M, A) = 0$ for $n \gg 0$ then $\projdim_A M < \infty$.
\end{theorem}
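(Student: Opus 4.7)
The plan is to reduce Theorem \ref{third} to the argument that proves Theorem \ref{first}, by enlarging $\Rc$ to a DG-algebra $Q$-resolution $\Tc$ of $C := B/(g_1,\ldots,g_r)$, verifying that $\Tc$ satisfies an appropriate analogue of condition $(\dagger)$, and then replaying the Theorem \ref{first} argument in this enlarged setting.

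First I would extract the structural content of hypothesis (2). Since $\ov{\Rc}$ has zero differentials, every entry of every differential in $\Rc$ lies in $(f_1,\ldots,f_c)$; in particular $I \subseteq (f_1,\ldots,f_c)$, giving surjections $A \twoheadrightarrow B \twoheadrightarrow C$. Lifting $g_1,\ldots,g_r$ to $\tilde g_1,\ldots,\tilde g_r \in Q$, the sequence $(f_1,\ldots,f_c,\tilde g_1,\ldots,\tilde g_r)$ is $Q$-regular with quotient $C$, and the Koszul complex $K$ on $\tilde g_1,\ldots,\tilde g_r$ is a DG $Q$-algebra that, after tensoring down to $B$, is a DG $B$-resolution of $C$. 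Assembling these pieces, $\Tc := \Rc \otimes_Q K$ is a DG $Q$-algebra whose total complex resolves $C$ over $Q$. The algebra $\Tc \otimes_Q C$ inherits trivial multiplication from $\ov{\Rc}$, and the decomposition $\n = k^2 \oplus \n'$ from hypothesis (3) furnishes two basis elements $u, v$ in the positive part of $\Tc \otimes_Q C$ that are annihilated by that positive part. After a further reduction modulo a regular system of parameters of $C$, we obtain an algebra $\Sc$ satisfying an analogue of $(\dagger)$; the rank assumption $\rank_Q \Rc \geq 3$ enters here to guarantee that $u, v$ are genuinely extra basis elements, leaving room for the multiplicative image required by clause $(1)$ of $(\dagger)$.

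With $\Sc$ and its $(\dagger)$-analogue in hand, the final step is to replay the Theorem \ref{first} argument: given $M$ with $\Ext^i_A(M,A)=0$ for $i \gg 0$, transport this vanishing into the DG-module framework over $\Tc$ and exploit $u,v$ together with trivial multiplication to force the minimal $A$-free resolution of $M$ to be bounded, yielding $\projdim_A M < \infty$. The main obstacle I expect is this last transfer: the hypothesis controls $\Ext$ over $A$, whereas the DG machinery lives over $Q$ and resolves $C$, not $A$. Reconciling these viewpoints — using that $B$ and $C$ are $A$-modules via the surjection produced in Step 1, while $A$ itself is resolved by $\Rc$ — and verifying that $\Ext^i_A(M,A)=0$ is strong enough to propagate through the tensor with the Koszul part $K$, is the delicate step in which essentially all the hypotheses of the theorem must interact at once.
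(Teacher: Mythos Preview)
Your plan has several concrete gaps. First, $\Tc = \Rc \otimes_Q K$ does not resolve $C$: its $H_0$ is $A/(\tilde g_1,\ldots,\tilde g_r)A = Q/(I,\tilde g_1,\ldots,\tilde g_r)$, and since only $I \subseteq (f_1,\ldots,f_c)$ is known (not equality), this need not coincide with $C = Q/(f_1,\ldots,f_c,\tilde g_1,\ldots,\tilde g_r)$. Second, your $u,v$ live in the wrong place: the splitting $\n = k^2 \oplus \n'$ occurs inside $C$, which sits in degree $0$ of $\Tc \otimes_Q C$, so $u,v$ are not elements of the positive part and cannot play the role required by $(\dagger)$. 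Third, $C$ is not regular (a $k^2$ summand in its maximal ideal already produces socle in $\n$), so there is no ``regular system of parameters of $C$'' to reduce modulo; you cannot pass to an algebra over $k$ in the way the definition of $(\dagger)$ demands.

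The paper does not try to manufacture a $(\dagger)$-situation. It works with $\Sc = \Rc/(f_1,\ldots,f_c)\Rc$, a DG algebra over $B$ rather than over $k$. Trivial multiplication gives $\Sc_+ \cong \bigoplus_i B[-a_i]$ with at least two summands (this is where $\rank_Q \Rc \geq 3$ enters), and the exact sequence $0 \to \Sc_+ \to \Sc \to B \to 0$ forces $\Ext^j_\Sc(\G,B)=0$ for $j\gg 0$ (where $\G$ is the reduction mod $(f_1,\ldots,f_c)$ of a semi-free $\Rc$-resolution of $M$) via a strictly decreasing sequence of \emph{minimal numbers of generators} --- lengths are unavailable since $B$ is not a field. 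This yields $\Ext^j_A(M,B)=0$, hence $\Ext^j_A(M,C)=0$ by $B$-regularity of the $g_i$. Only now does the splitting $\n = k^2 \oplus \n'$ enter, and purely at the level of $A$-modules: from $0 \to \n \to C \to k \to 0$ one gets that $\Ext^{j+1}_A(M,k)^2$ is a summand of $\Ext^j_A(M,k)$, so $\beta_j(M) > \beta_{j+1}(M)$ for $j\gg 0$, contradicting $\projdim_A M = \infty$. The DG step and the $k^2$-splitting are decoupled; attempting to fuse them into a single $(\dagger)$-verification is what breaks your argument.
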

We give several examples where Theorem \ref{third} is applicable. One example is when $(R, \n)$ is a \CM \ local ring of minimal multiplicity and not Gorenstein. Let $Q = R[X_{ij} \mid  1 \leq i \leq r -1, 1\leq j \leq r]_{(\n, X_{ij})}$ and $A = Q/I$ where $I$ is the ideal generated by maximal minors of the matrix $[X_{ij}]$ with $r \geq 3$.
For more examples see \ref{ex-third}.

\textbf{IV} \emph{Growth of Bass numbers and Betti numbers:}\\
Let $M$ be an $A$-module and let $\mu_i(\m, M) = \ell(\Ext^i_A(k, M))$ be the $i^{th}$-Bass number and let $\beta_i(M) = \ell(\Tor^A_i(M,k)$ be the $i^{th}$-betti number of $M$.
We study growth of Bass and Betti numbers. Surprisingly DG algebra techniques are useful to study these numbers.
We  first prove
\begin{theorem}\label{dagger-bass}
(with hypotheses as in \ref{first}). Let $M$ be an $A$-module.
\begin{enumerate}[\rm (1)]
  \item Assume $\injdim_A M = \infty$. Then the sequence $\{\mu_n(\m, M) \}_{n \geq 0}$ is unbounded
  \item Assume $\projdim_A M = \infty$ and that $A$ is \CM. Then the sequence $\{ \beta_n(M) \}_{n \geq 0}$ is unbounded.
\end{enumerate}
 \end{theorem}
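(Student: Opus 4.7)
The plan is to adapt the DG-algebraic machinery of Theorem~\ref{first} to a quantitative setting. After passing to completion we work with the graded $k$-algebra $\Sc = \Rc\otimes_Q k \cong \Tor^Q_*(A,k)$, whose differential vanishes by minimality of $\Rc$, together with the two elements $u,v \in \Sc_+$ supplied by $(\dagger)$. Lifting $k$ and $M$ to semifree DG-$\Rc$-modules and exploiting the DG-structure makes both $\Tor^A_*(k,M)$ and $\Ext^*_A(k,M)$ into graded $\Sc$-modules, in which $u$ and $v$ act as raising operators.

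For part~(1), assume for contradiction that $\mu_n(\m,M) \le C$ for all $n$ while $\injdim_A M = \infty$. The relations $\Sc_+ u = \Sc_+ v = 0$ and $\Sc_+^2 \subseteq \bigoplus_{p\ne u,v} kp$ force the multiplication maps
\[
u_\ast,\ v_\ast \colon \Ext^n_A(k,M)\longrightarrow \Ext^{n+|u|}_A(k,M) \oplus \Ext^{n+|v|}_A(k,M)
\]
to be nondegenerate modulo a contribution whose dimension $(\dagger)$ explicitly controls. Iterating---exactly as in the proof of Theorem~\ref{first}, but now tracking dimensions rather than mere supports---yields the dichotomy: either $\{\dim_k \Ext^n_A(k,M)\}$ eventually grows beyond $C$, or $\Ext^n_A(k,M) = 0$ for $n \gg 0$. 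The bound $C$ excludes the first alternative, so $\injdim_A M < \infty$, a contradiction.

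For part~(2), use the \CM\ hypothesis to reduce to the artinian case. Replace $M$ by $\Omega^d_A M$ so that $M$ is maximal \CM; choose $f_1,\ldots,f_d \in Q$ lifting an $A$- and $M$-regular sequence, lying in $\n\setminus\n^2$ and linearly independent in $\n/\n^2$ (available since $I\subseteq\n^2$). Set $\ov Q = Q/(f_1,\ldots,f_d)$ and $\ov A = A/(f_1,\ldots,f_d)$. Then $\ov\Rc := \Rc\otimes_Q\ov Q$ is the minimal $\ov Q$-resolution of $\ov A$, and $\ov\Rc\otimes_{\ov Q}k = \Sc$, so $(\dagger)$ persists over $\ov A$; moreover $\beta_n^{\ov A}(\ov M) = \beta_n^A(M)$ by the standard preservation of Betti numbers under mod-out by a joint $A$-$M$-regular sequence. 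Over the artinian $\ov A$, Matlis duality gives $\beta_n^{\ov A}(\ov M) = \mu_n(\ov\m, \ov M^{\vee})$ (with $\ov M^{\vee} = \Hom_{\ov A}(\ov M, E(k))$) and converts $\projdim_{\ov A}\ov M = \infty$ into $\injdim_{\ov A}\ov M^{\vee} = \infty$. Applying part~(1) over $\ov A$ to $\ov M^{\vee}$ yields unboundedness of $\mu_n(\ov\m, \ov M^{\vee})$, hence of $\beta_n^A(M)$.

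The main obstacle is the quantitative refinement in part~(1): whereas Theorem~\ref{first} upgrades eventual \emph{vanishing} of certain $\Ext$'s to finiteness of projective dimension, here one must upgrade a \emph{uniform bound} on $\mu_n$ to outright vanishing. This requires a careful count showing that $u$- and $v$-multiplication create enough new linearly independent classes in successive degrees, with the $(\dagger)$-relations ruling out cancellations; that dimension-tracking step is the technical heart of the argument.
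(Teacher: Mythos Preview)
Your plan for part~(1) has a real gap. You assert that $\Ext^*_A(k,M)$ carries a graded $\Sc$-module structure on which $u,v$ act as ``raising operators,'' and then propose to iterate multiplication by $u,v$ while tracking dimensions. But you never construct this action, and it is not standard: $\Sc=\Tor^Q_*(A,k)$ is Koszul \emph{homology}, not the Yoneda algebra $\Ext^*_A(k,k)$, and there is no evident natural $\Sc$-action on $\Ext^*_A(k,M)$. Even granting one, the conditions in $(\dagger)$ constrain multiplication \emph{inside} $\Sc$, not how $\Sc$ acts on an arbitrary module, so your claim that $u_*,v_*$ are ``nondegenerate modulo a contribution controlled by $(\dagger)$'' has no visible justification. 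You flag the dimension-tracking as ``the technical heart'' and then do not carry it out.

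The paper's argument for~(1) is both different and much simpler, and avoids this issue entirely. One works contravariantly in the first variable: since $\Ext^i_\Rc(\Rc,M)=\Ext^i_A(A,M)=0$ for $i\ge 1$, iterating the triangles $\Rc\xrightarrow{x_j}\Rc\to\Rc/x_j\Rc\to\Rc[-1]$ gives $\Ext^i_\Rc(\Sc,M)=0$ for $i\gg 0$. The triangle from $0\to\Sc_+\to\Sc\to k\to 0$ then yields $\Ext^i_\Rc(k,M)\cong\Ext^{i-1}_\Rc(\Sc_+,M)$ for $i\gg 0$, and Observation~\ref{obs}---that $k[-a]\oplus k[-b]$ splits off $\Sc_+$ as a DG $\Sc$-summand---immediately gives $\mu_i\ge\mu_{i-a-1}+\mu_{i-b-1}$ for $i\gg 0$, hence unboundedness. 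No module action of $\Sc$ on $\Ext$ is needed; the DG splitting does all the work.

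For part~(2) your reduction to the artinian case plus Matlis duality is a valid alternative to the paper's route (pass to the MCM syzygy $N=\Syz^A_{\dim A}(M)$ and use canonical-module duality $N\mapsto\Hom_A(N,\omega)$, together with $\mu_{i+\dim A}(\m,N^\vee)=\beta_i(N)$); either way~(2) reduces to~(1).
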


To study modules with infinite projective dimension  the following notion of curvature of a module was introduced in \cite{A-ext}.
\[
\curv(M) = \limsup_{n \rt \infty} \sqrt[n]{\beta_n(M)}.
\]
To study modules with infinite injective dimension the  dual notion of plexity of a module is studied.
\[
\plex(M) = \limsup_{n \rt \infty} \sqrt[n]{\mu_n(\m, M)}.
\]
We show
\begin{theorem}
\label{third-bass}
(with hypotheses as in \ref{third}). Let $M$ be an $A$-module.
\begin{enumerate}[\rm (1)]
  \item Assume $\injdim_A M = \infty$. Then $\plex(M) \geq 2$.
  \item Assume $\projdim_A M = \infty$ and that $A$ is \CM \ with a canonical module. Then $\curv(M) \geq 2$.
\end{enumerate}
 \end{theorem}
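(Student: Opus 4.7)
The plan is to combine the strategies of Theorems \ref{third} and \ref{dagger-bass}. Theorem \ref{third} shows how the trivial multiplication of $\ov{\Rc}$, together with the decomposition $\n = k^2 \oplus \n^\prime$ at the level of $C := B/(g_1, \dots, g_r)$, controls $A$-module homology; Theorem \ref{dagger-bass} extracts unboundedness of Bass and Betti numbers under the weaker condition $(\dagger)$. Here the split $k^2$ summand will upgrade unboundedness to exponential growth of rate at least two.

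First I would pass to the completion so that a canonical module is available in part (2); the DG-algebra $\Rc$, the regular sequences $f_1, \dots, f_c$ and $g_1, \dots, g_r$, and the trivial multiplication of $\ov{\Rc}$ all descend. Writing $\n = ku \oplus kv \oplus \n^\prime$, the two elements $u, v$ are minimal generators of $\n$ with $\n u = \n v = 0$, sitting in a direct $C$-module summand. I would first establish the doubling inside $C$: assembling a DG-module resolution of $k$ over $C$ from $\ov{\Rc}$ (zero differentials, trivial multiplication) and the minimal $C$-resolution of $k$, one lifts the two independent socle-summand elements to produce two independent $k$-summands inside each $\Syz^C_n(k)$. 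Careful bookkeeping with the tensor-product structure yields $\beta_n^C(k) \geq 2\,\beta_{n-1}^C(k)$ for $n$ large, so $\curv_C(k) \geq 2$; and since $\beta_n^C(k) = \mu_n^C(\n, k)$ for the residue field, also $\plex_C(k) \geq 2$.

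To transfer the growth from $C$ back to $A$, reduce across the regular sequence $g_1, \dots, g_r$ by a Rees-type formula: Ext and Tor over $B$ relate to those over $C$ by polynomial factors of the form $(1+t)^r$, preserving curvature and plexity. To go from $B$ to $A$, use the DG-algebra structure: the condition $(\ov{\Rc}_+)^2 = 0$ forces the minimal $A$-resolution of any module to decompose along the graded pieces of $\Rc$, converting the doubling inside $C$ into a corresponding doubling of summands in the minimal $A$-resolution of $M$ whenever $\projdim_A M = \infty$. This yields (2), with the canonical module providing the pairing between Betti and Bass structure; analogously one obtains $\mu_n(\m, M) \geq 2\,\mu_{n-1}(\m, M)$ in (1), once $\injdim_A M = \infty$ guarantees that infinitely many Bass numbers are non-vanishing. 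The principal obstacle is verifying that the two lifted copies of $u$ and $v$ remain \emph{independent} at every homological degree; this is precisely where $(\ov{\Rc}_+)^2 = 0$ is indispensable, since it forbids the DG-differentials from entangling or cancelling the two copies.
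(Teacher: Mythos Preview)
Your proposal has a genuine gap in the transfer step from $C$ (or $B$) back to $A$. You establish, or at least plausibly sketch, that $\beta_n^C(k) \geq 2\beta_{n-1}^C(k)$; but the quantity you must control is $\mu_n(\m,M)=\ell(\Ext^n_A(k,M))$ for an arbitrary $A$-module $M$, and nothing in your outline connects growth of the $C$-resolution of $k$ to growth of $\Ext^*_A(k,M)$. The sentence ``$(\ov{\Rc}_+)^2=0$ forces the minimal $A$-resolution of any module to decompose along the graded pieces of $\Rc$'' is not a theorem you can cite, and even if some form of it held, it would describe the resolution of $M$, not the Bass numbers of $M$. Likewise the Rees-type comparison between $B$ and $C$ concerns cohomology \emph{over} $B$ versus \emph{over} $C$, whereas you need cohomology over $A$.

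The paper avoids this transfer problem entirely by never leaving $\D(\Rc)\simeq\D(A)$. It mods $\Rc$ out by the full regular sequence $f_1,\ldots,f_c,g_1,\ldots,g_r$ to obtain $\Tc$, a complex of free $C$-modules with zero differentials and trivial multiplication, and shows (by iterating Koszul triangles) that $\Ext^i_\Rc(\Tc,M)=0$ for $i\gg 0$. Writing $T_l=C^a$ for the top term and $\Gc$ for the subcomplex obtained by replacing $T_l$ with $\m T_l$, trivial multiplication makes $\Gc$ a DG $\Tc$-submodule, and the decomposition $\m=k^2\oplus\n'$ forces $k^{2a}[-l]$ to split off $\Gc$ as a DG summand. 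The triangle $\Gc\to\Tc\to k^a[-l]\to\Gc[-1]$ in $\D(\Rc)$ then gives directly $a\mu_i \geq 2a\mu_{i-1}$ for $i\gg 0$, hence $\plex(M)\geq 2$. Your treatment of part (2) via the canonical-module duality $\mu_{i+\dim A}(\m,N^\vee)=\beta_i(N)$ with $N=\Syz^A_{\dim A}(M)$ matches the paper and is fine once (1) is in hand.
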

We now describe in brief the contents of this paper. In section two we discuss some preliminaries on DG algebras and modules that we need. In section three we discuss behaviour of DG-modules when we go mod a suitable element. In section four we give examples when the hypotheses of Theorems \ref{first} and \ref{third} are satisfied. In section five we give a proof of Theorem \ref{first}. In section six we give a proof of Theorem \ref{taka} when $A$ is a quotient of a regular local ring. In the next section we give a proof of Theorem \ref{third}. Finally in section eight we give proofs of Theorems \ref{dagger-bass} and \ref{third-bass}.
\section{preliminaries}
In this section we recall some notions of differentially graded algebra and modules. Throughout $Q$ is a commutative Noetherian ring. We assume the reader is acquainted with the basic notions on complexes of $Q$-modules. We index complexes homologically, i.e.,
$$ M \colon  \cdots \rt M_n \xrightarrow{d_n} M_{n-1} \xrightarrow{d_{n-1}} \cdots. $$
If $m \in M_n$ then we set $|m| = n$. We say $M$ is bounded below if there exists $a$ such that $M_n = 0$ for all $n < a$. \emph{All complexes considered in this paper will be bounded below}.
The tensor product of two $Q$-complexes $M, N$ will be denoted by $M \otimes_Q N$ and the Hom complex will be denoted as $\Hom_Q(M, N)$.

\s By a DG (= differential graded) $Q$-algebra we mean an associative algebra $\Rc$ over $Q$ equipped with a $Q$-linear map $d \colon \Rc \rt \Rc$ such that
\begin{enumerate}
  \item $\Rc = \bigoplus_{i \geq 0}\Rc_i$ such that $\Rc_i \Rc_j \subseteq \Rc_{i+j}$ for all $i, j$.
  \item $\Rc$ has a unit element $1$ with $\Rc_0 = Q1$ and $\Rc_i$ is a finitely generated $Q$-module for all $i \geq 0$.
  \item $\Rc$ is strictly skew commutative, i.e., $xy = (-1)^{|x||y|}yx$  for all $x, y$ and $x^2 =0$ if $|x|$ is odd.
  \item The map $d$ is a skew-derivation of degree $-1$, i.e., $d(\Rc_n) \subseteq \Rc_{n-1}$, $d^2 = 0$ and
  $$ d(xy) = d(x)y + (-1)^{|x|}xd(y). $$
\end{enumerate}

The underlying algebra is the ring $\Rc^\natural  = \bigoplus_{i \geq 0}\Rc_i$. A morphism of DG $Q$-algebras is a  \emph{chain map} $f \colon \Rc \rt \Sc$ between DG $R$-algebras respecting
products and multiplicative identities:
$f(rt) = f(r)f(t)$ and $f (1) = 1$.

\s Let $\Rc$ be a DG-$Q$ algebra. By a DG $\Rc$-module we mean a $Q$-complex $M$ such that $M^\natural$ is an associative, unital $\Rc^\natural$-module and for all $r \in \Rc$ and $m \in M$ homogeneous we have
$$d_M(rm) = d_\Rc(r)m + (-1)^{|r|}rd_M(m).$$
A morphism of DG $\Rc$-modules is a \emph{chain map} $f \colon  M \rt N$   that respects scalar multiplication: $f (am) = af (m)$. We say $f$ is a quasi-isomorphism if the induced maps $H_i(f) \colon H_i(M) \rt H_i(N)$ are isomorphism for all $i$.

\s A bounded below DG $\Rc$-module $F$ is said to be \emph{semi-free} if $F^\natural$ is a free $\Rc^\natural$-module. Suppose $M$ is a bounded below complex. Then $M$ has a \emph{semi-free resolution}, i.e., a quasi-isomorphism $F \rt M$ with $F$ semi-free.
If $M_n$ a finitely generated $Q$-module for all $n$  then we can construct $F$ with
$F_n$ finitely generated  $Q$-module, \cite[Proposition 2]{Ap}.

\s Let $M, N$ be DG $\Rc$-modules. A (homogeneous) map $\beta \colon M \rt N$ of the underlying complexes is said to be $\Rc$-linear if $\beta(rm) = (-1)^{|r||\beta|}r\beta(m)$ for all homogeneous $r \in \Rc$ and $m \in M$. The $\Rc$-linear homomorphisms form a sub-complex  $\Hom_\Rc(M, N)$ of $\Hom_Q(M,N)$.
A homomorphism $f \in  \Hom_\Rc(M, N)_i$ is null-homotopic if it is a boundary in $\Hom_\Rc(M, N)$. Two homomorphisms
$M \rt N$  are homotopic if their difference is null-homotopic

 The action
\[
(a\beta)(m) = (-1)^{|a||\beta|}\beta(am)
\]
turns $\Hom_\Rc(M, N)$ into a DG $\Rc$-module.

\s Let $\C(\Rc)$ denote the abelian category of DG-$\Rc$ modules. Let $K(\Rc)$ be the homotopy category and let $\D(\Rc)$ be the derived category.
We define
 $$\Ext^i_\Rc(M, N) := \Hom_{\D(\Rc)}(M, N[-i]).$$

Let $M, N$ be bounded below DG $\Rc$-modules. Let $F \rt M$ be a semi-free resolution of $M$. Then
\[
 \Hom_{\D(\Rc)}(M, N) = \Hom_{K(\Rc)}(F, N) = H_0(\Hom_\Rc(F, N)).
\]
A short exact sequence $0 \rt M \rt N \rt L \rt 0$ of DG $\Rc$-modules induces a triangle $M \rt N \rt L \rt M[-1]$ in $\D(\Rc)$.

\s If $\Rc$ and $\Sc$ are DG $Q$-algebras with a morphism $ f \colon \Rc \rt \Sc$ of DG $Q$-algebras. If $f$ is a quasi-isomorphism then  we have an equivalence of derived categories $\D(\Sc) \rt \D(\Rc)$ given by the restriction functor, see \cite[8.4]{K}.

\s Let $R$ be a commutative ring. Set $ S := R/(f) $, where $ f $ is an $ R $-regular element.   Using a standard change of rings spectral sequence, we obtain the following:

\begin{lemma}\label{lem:long-exact-seq-ext}[\cite[10.75]{R}]
	Let $ M $ and $ N $ be $ S $-modules. Then we have the following long exact sequence:
	\begin{align*}
		0 \lra  \Ext_S^1(M,N) \lra & \Ext_R^1(M,N) \lra \Ext_S^0(M,N) \lra \\
		& \quad \quad \vdots \\
		\Ext_S^i(M,N) \lra & \Ext_R^i(M,N) \lra \Ext_S^{i-1}(M,N) \lra \\
		\Ext_S^{i+1}(M,N) \lra & \Ext_R^{i+1}(M,N) \lra \Ext_S^i(M,N) \lra \cdots.
	\end{align*}
\end{lemma}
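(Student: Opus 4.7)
The plan is to invoke the Cartan--Eilenberg change of rings spectral sequence
\[
E_2^{p,q} = \Ext_S^p\bigl(M, \Ext_R^q(S, N)\bigr) \Longrightarrow \Ext_R^{p+q}(M, N),
\]
which is available because $M$ and $N$ are $S$-modules and $S = R/(f)$ is a quotient of $R$. To make this useful I first need to compute the coefficient modules $\Ext_R^q(S, N)$.

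Since $f$ is $R$-regular, a free $R$-resolution of $S$ is the two-term complex $0 \rt R \xrightarrow{f} R \rt 0$. Applying $\Hom_R(-, N)$ and using that $fN = 0$ (because $N$ is an $S$-module), I obtain $\Ext_R^0(S, N) = N = \Ext_R^1(S, N)$ and $\Ext_R^q(S, N) = 0$ for all $q \geq 2$. Hence the spectral sequence lives in exactly the two rows $q = 0, 1$, and both rows are $\Ext_S^\bullet(M, N)$.

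A spectral sequence supported in two consecutive rows collapses: the only possibly non-zero higher differential is $d_2 \colon \Ext_S^p(M, N) \rt \Ext_S^{p+2}(M, N)$ on the $q = 1$ row, so $E_3 = E_\infty$. Unwinding the resulting two-step filtration on $\Ext_R^n(M, N)$ gives, for each $n$, a short exact sequence
\[
0 \rt \Ext_S^n(M,N)/\image(d_2) \rt \Ext_R^n(M, N) \rt \ker\bigl(d_2\colon \Ext_S^{n-1}(M,N) \rt \Ext_S^{n+1}(M,N)\bigr) \rt 0,
\]
and splicing these together yields exactly the long exact sequence in the statement; the connecting morphism $\Ext_S^{i-1}(M,N) \rt \Ext_S^{i+1}(M,N)$ is the differential $d_2$. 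The initial segment $0 \rt \Ext_S^1(M, N) \rt \Ext_R^1(M, N) \rt \Ext_S^0(M, N)$ appears because $E_\infty^{-2, 1} = 0$, and the leading zero reflects $\Ext_S^{-1} = 0$.

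The main obstacle is purely bookkeeping: verifying that the abutment filtration is precisely two-step, and that the resulting three-term pieces splice correctly with the differentials $d_2$ to give the stated long exact sequence. Conceptually nothing deep is required because the spectral sequence in question is the standard Grothendieck spectral sequence for the composition of the functors $\Hom_S(M, -)$ and $\Hom_R(S, -)$, whose composite is $\Hom_R(M, -)$ on $S$-modules.
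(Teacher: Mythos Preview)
Your proof is correct and follows exactly the approach the paper indicates: the paper does not give its own argument but simply cites \cite[10.75]{R} and remarks that the lemma follows from ``a standard change of rings spectral sequence,'' which is precisely the Cartan--Eilenberg/Grothendieck spectral sequence you invoke and unwind. Your computation of $\Ext_R^q(S,N)$ from the two-term Koszul resolution and the subsequent two-row collapse with splicing via $d_2$ are the expected details behind that citation.
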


An elementary corollary is
\begin{corollary}\label{asymp}
Let $(R, \n)$ be a regular local ring of dimension $d + 1$. Let $S = R/(f)$ where $f \in \n^2$. Let $M, N$ be $S$-modules. Suppose $\Ext^{n}_S(M, N) = \Ext^{n+1}_S(M,N) = 0$ for some $n > d$. Then
$\Ext^i_S(M, N) = 0$ for all $i \gg 0$.
\end{corollary}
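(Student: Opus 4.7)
The plan is to exploit the long exact sequence in Lemma \ref{lem:long-exact-seq-ext} together with the fact that $R$ has finite global dimension. Since $R$ is regular local of dimension $d+1$, every finitely generated $R$-module has projective dimension at most $d+1$, so $\Ext^i_R(M,N) = 0$ for all $i \geq d+2$.

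Next, I would isolate the portion of the long exact sequence that connects $\Ext^i_S$ to $\Ext^{i+2}_S$. Reading the sequence in the lemma, one finds a segment of the form
$$\Ext^{i+1}_R(M,N) \lra \Ext^i_S(M,N) \lra \Ext^{i+2}_S(M,N) \lra \Ext^{i+2}_R(M,N).$$
For $i \geq d+1$ both outer terms vanish by the observation above, so the induced map yields an isomorphism $\Ext^i_S(M,N) \cong \Ext^{i+2}_S(M,N)$ for every $i \geq d+1$.

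Finally, by hypothesis there is some $n > d$, i.e.\ $n \geq d+1$, with $\Ext^n_S(M,N) = \Ext^{n+1}_S(M,N) = 0$. Iterating the isomorphism along even and odd indices separately gives $\Ext^{n+2k}_S(M,N) = 0$ and $\Ext^{n+1+2k}_S(M,N) = 0$ for all $k \geq 0$, hence $\Ext^i_S(M,N) = 0$ for every $i \geq n$, which is the desired conclusion.

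There is no serious obstacle here; the only subtlety is to locate the right segment of the long exact sequence and to verify that both adjacent $\Ext^{\bullet}_R$ terms vanish simultaneously for $i \geq d+1$, which forces the two-periodicity that closes the argument.
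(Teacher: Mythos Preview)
Your argument is correct and is exactly the elementary deduction the paper has in mind: the paper does not spell out a proof, merely presenting the statement as an ``elementary corollary'' of Lemma~\ref{lem:long-exact-seq-ext}, and your use of that long exact sequence together with $\Ext^{i}_R(M,N)=0$ for $i\geq d+2$ to obtain the two-periodicity $\Ext^i_S(M,N)\cong\Ext^{i+2}_S(M,N)$ for $i\geq d+1$ is precisely the intended route.
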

\section{Going mod a regular element}
In our arguments we have to go mod an appropriately chosen element $x$ in $Q$ (where $Q$ is a Noetherian ring). We describe this general construction in this section.

\s Let $\Rc$ be a DG $Q$-algebra. Let $x \in Q$. Set $\ov{\Rc} = \Rc/x\Rc$. Then it is easily verified that $\ov{\Rc}$ is also a DG $Q$-algebra. Let $M$ be a DG $\Rc$-module. Then $\ov{M} = M/xM$ is a DG $\ov{\Rc}$-module.

\s  \label{hom-mod} We have an obvious morphism $f \colon \Rc \rt \ov{\Rc}$ of DG $Q$-algebras. So any DG $\ov{\Rc}$-module inherits a DG $\Rc$-module structure. We note that
if $M$ is a DG $\Rc$-module and $N$ is a DG $\ov{\Rc}$-module then we have
\[
\Hom_\Rc(M, N) = \Hom_{\ov{\Rc}}(\ov{M}, N).
\]

\s \label{mod-ext} Assume $x$ is $R^\natural$-regular. Let $F$ be a semi-free DG $\Rc$-module. Let $N$ be a bounded below DG $\ov{\Rc}$-module.

\begin{enumerate}
  \item Note that $x$ is $F^\natural$-regular. Clearly $\ov{F} $ is a semi-free DG $\ov{\Rc}$-module.
\item We have
$$ \Hom_{\D(\Rc)}(F, N) = H_0(\Hom_{\Rc}(F,N)) = H_0(\Hom_{\ov{\Rc}}(\ov{F},N)) = \Hom_{\D(\ov{\Rc})}(\ov{F}, N).$$
  \item $\Ext_\Rc^i(F, N) = \Ext_{\ov{\Rc}}^i(\ov{F}, N)$ for all $i$.
\end{enumerate}

We need the following result
\begin{lemma}\label{long-vanishing}
Let $\Rc$ be a DG $Q$-algebra. Let $F$ be a semi-free DG $\Rc$-module and let $M$ be a bounded below DG $\Rc$-module. Assume $x \in Q$ is $R^\natural \oplus M^\natural$-regular. If
$\Ext^i_{\Rc}(F,M) = 0$ for all $i \gg 0$ then $\Ext^i_{\ov{\Rc}}(\ov{F}, \ov{M}) = 0$ for all $i \gg 0$.
\end{lemma}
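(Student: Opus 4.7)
The plan is to reduce the problem to the change-of-rings identification in \ref{mod-ext}(3) by first showing vanishing of $\Ext^i_\Rc(F,\ov{M})$ via a short exact sequence coming from multiplication by $x$ on $M$.

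First I would verify that multiplication by $x$ is a morphism $\mu_x\colon M \to M$ of DG $\Rc$-modules. This is automatic: since $x \in Q \subseteq \Rc_0$ and $\Rc$ is concentrated in non-negative degrees, $d_\Rc(x) = 0$, so $\mu_x$ commutes with differentials; and for any homogeneous $r \in \Rc$ we have $\mu_x(rm) = xrm = rxm = r\mu_x(m)$ (noting $|\mu_x| = 0$, so there is no sign). Because $x$ is $M^\natural$-regular, $\mu_x$ is injective with cokernel $\ov{M}$, which gives a short exact sequence of DG $\Rc$-modules
\[
 0 \lra M \xrightarrow{\ x\ } M \lra \ov{M} \lra 0.
\]

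Next I would apply $\Hom_{\D(\Rc)}(F,-)$ to the induced triangle in $\D(\Rc)$ to get a long exact sequence
\[
 \cdots \to \Ext^i_\Rc(F,M) \xrightarrow{\ x\ } \Ext^i_\Rc(F,M) \to \Ext^i_\Rc(F,\ov{M}) \to \Ext^{i+1}_\Rc(F,M) \to \cdots.
\]
By hypothesis both outer terms vanish for $i \gg 0$, hence $\Ext^i_\Rc(F,\ov{M}) = 0$ for $i \gg 0$.

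Finally, since $\ov{M}$ is canonically a DG $\ov{\Rc}$-module and $x$ is $\Rc^\natural$-regular (so that $\ov{F}$ is semi-free over $\ov{\Rc}$), the identification in \ref{mod-ext}(3) gives
\[
 \Ext^i_\Rc(F,\ov{M}) \;=\; \Ext^i_{\ov{\Rc}}(\ov{F},\ov{M})
\]
for all $i$, which yields the desired conclusion. There is no real obstacle here; the only thing worth being careful about is ensuring that the hypothesis ``$x$ is $\Rc^\natural \oplus M^\natural$-regular'' is used in both places it is needed: the $M^\natural$-regularity builds the short exact sequence, and the $\Rc^\natural$-regularity legitimizes invoking \ref{mod-ext} to pass to $\ov{\Rc}$.
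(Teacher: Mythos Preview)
Your proof is correct and follows essentially the same approach as the paper: build the short exact sequence $0 \to M \xrightarrow{x} M \to \ov{M} \to 0$, pass to the induced triangle in $\D(\Rc)$, deduce $\Ext^i_\Rc(F,\ov{M}) = 0$ for $i \gg 0$ from the long exact sequence, and then invoke \ref{mod-ext} to identify this with $\Ext^i_{\ov{\Rc}}(\ov{F},\ov{M})$. Your added remarks (checking that $\mu_x$ is a DG $\Rc$-module map and noting exactly where each half of the regularity hypothesis is used) are helpful elaborations but do not change the argument.
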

\begin{proof}
We have a short exact sequence of DG $\Rc$-modules
\[
0 \rt M \xrightarrow{x} M \rt \ov{M} \rt 0
\]
This induces a triangle $M \rt M \rt \ov{M} \rt M[-1]$ in $\D(\Rc)$. So we have a long exact sequence
\begin{align*}
  \cdots &\rt \Hom_{\D(\Rc)}(F, M[-i]) \rt \Hom_{\D(\Rc)}(F, M[-i]) \rt\Hom_{\D(\Rc)}(F, \ov{M}[-i]) \rt \\
   &\Hom_{\D(\Rc)}(F, M[-i - 1]) \rt \cdots
\end{align*}
It follows that $\Ext^i_\Rc(F,\ov{M}) = 0$ for all $i \gg 0$.  By \ref{mod-ext} we have $\Ext^i_\Rc(F,\ov{M}) = \Ext^i_{\ov{\Rc}}(\ov{F},\ov{M})$. The result follows.
\end{proof}
The following result is certainly known to the experts. We give a proof as we do not have a reference.
\begin{remark}\label{semi-mod}
Let $\Rc$ be a DG $Q$-algebra. Let $M$ be a bounded below DG $\Rc$-module. Let $\phi \colon F \rt M$ be a semi-free resolution.  Assume $x \in Q$ is $R^\natural \oplus M^\natural$-regular. Then

$\bullet$ the natural map
$\ov{\phi} \colon F/xF \rt M/xM$ is a quasi-isomorphism.

$\bullet$  As $(F/xF)^\natural$ is a free $(\Rc/x\Rc)^\natural$-module it follows that $F/xF \rt M/xM$ is a semi-free resolution.

We only have to prove the first assertion.

 Let $C = \cone(\phi)$. Then $C$ is an acyclic bounded below complex. We  have an exact sequence
 $$0 \rt M \rt C \rt F[-1] \rt 0.$$ So $x$ is
 $C$-regular. It easily seen that $C/xC$ is also acyclic. Observe $\cone(\ov{\phi}) \cong C/xC$. So $\ov{\phi} \colon F/xF \rt M/xM$ is a quasi-isomorphism.
\end{remark}
We need the following consequence of Remark \ref{semi-mod}.
\begin{corollary}
\label{mod-regular}
Let $\Rc$ be a DG $Q$-algebra. Let $M$ be a bounded below DG $\Rc$-module.  Assume $x \in Q$ is $R^\natural \oplus M^\natural$-regular. Then
 Let $N$ be a bounded below DG $\ov{\Rc}$-module. Set $\ov{M} = M/xM$. Then
$\Ext_\Rc^i(M, N) = \Ext_{\ov{\Rc}}^i(\ov{M}, N)$
\end{corollary}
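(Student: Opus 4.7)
The plan is to chain together the three ingredients already established: Remark \ref{semi-mod} (which produces a semi-free resolution over $\ov{\Rc}$ from one over $\Rc$ upon reduction mod $x$), the identification \ref{hom-mod} of Hom complexes, and the computation of Ext as $H_0$ of $\Hom$ from a semi-free resolution.

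First I would choose a semi-free resolution $\phi \colon F \rt M$ of $M$ as a DG $\Rc$-module; such a resolution exists because $M$ is bounded below. Since $x$ is $\Rc^\natural \oplus M^\natural$-regular, Remark \ref{semi-mod} applies and tells us two things simultaneously: the induced map $\ov{\phi} \colon \ov{F} \rt \ov{M}$ is a quasi-isomorphism, and $\ov{F}$ is a semi-free DG $\ov{\Rc}$-module. So $\ov{\phi}$ is a semi-free resolution of $\ov{M}$ over $\ov{\Rc}$.

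Next I would compute both sides using these resolutions. On the one side, since $F$ is a semi-free resolution of $M$ over $\Rc$, we have
\[
\Ext_\Rc^i(M, N) = \Hom_{\D(\Rc)}(F, N[-i]) = H_0(\Hom_\Rc(F, N[-i])).
\]
Because $N$ is a DG $\ov{\Rc}$-module (so in particular $N[-i]$ is), \ref{hom-mod} gives the identification $\Hom_\Rc(F, N[-i]) = \Hom_{\ov{\Rc}}(\ov{F}, N[-i])$. Taking $H_0$ and invoking the fact that $\ov{F}$ is a semi-free resolution of $\ov{M}$ over $\ov{\Rc}$, we get
\[
H_0(\Hom_{\ov{\Rc}}(\ov{F}, N[-i])) = \Hom_{\D(\ov{\Rc})}(\ov{F}, N[-i]) = \Ext_{\ov{\Rc}}^i(\ov{M}, N),
\]
which yields the desired equality.

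There is essentially no obstacle, as the statement is a packaging of Remark \ref{semi-mod} and the observation in \ref{hom-mod}; the only point requiring care is that the hypothesis on $x$ being $\Rc^\natural \oplus M^\natural$-regular (rather than just $\Rc^\natural$-regular) is exactly what is needed so that Remark \ref{semi-mod} applies and $\ov{F}$ genuinely resolves $\ov{M}$. Once this is in place, the rest is formal.
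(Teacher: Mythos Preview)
Your proof is correct and follows essentially the same route as the paper: choose a semi-free resolution $F \to M$, use Remark \ref{semi-mod} to see that $\ov{F} \to \ov{M}$ is a semi-free resolution over $\ov{\Rc}$, and identify the Hom complexes via \ref{hom-mod}. The only cosmetic difference is that the paper invokes the packaged statement \ref{mod-ext}(3) in place of unfolding \ref{hom-mod} directly, but the content is identical.
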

\begin{proof}
  Let $\phi \colon F \rt M$ be a semi-free resolution. We have \\ $\Ext^i_\Rc(M, N) = \Ext^i_\Rc(F, N)$. By \ref{mod-ext} we get
 $\Ext_\Rc^i(F, N) = \Ext_{\ov{\Rc}}^i(\ov{F}, N)$ for all $i$. By \ref{semi-mod} the map   $\ov{\phi} \colon F/xF \rt M/xM$ is a semi-free resolution. The result follows
\end{proof}

\section{Examples}
In this section we give a large class of examples of rings which satisfy hypotheses of Theorem \ref{first} and \ref{third}.

\s \label{ex-trivial} Let $(Q,\n)$ be a  local ring and let $A =Q/I$. Assume $\projdim_Q A$ is finite and the minimal resolution $\Rc$ of $A$ over $Q$ has the structure of a DG-algebra. We are  interested in algebras $\Rc$ such that there exists a regular sequence $\bg = g_1, \ldots, g_c$ such that the resulting algebra $\Sc = \Rc/\bg \Rc$ has trivial multiplication (i.e., $(\Sc_+)^2 = 0$).
\begin{enumerate}
  \item  There exists plenty of examples when $Q$ is regular and $\bg$ is a regular system of parameters, see \cite{P}. Note that in this case $A$ is Golod.
  \item Suppose $R$ contains a field $k$. Let $I$ be a homogeneous ideal in $T = k[X_1, \ldots, X_n]$ such that the minimal projective resolution $\Rc$ of $T/I$ has a DG-algebra resolution. Assume $\Rc/\bx \Rc$ has trivial multiplication. Consider $Q = R\otimes_kT$. Then $\Sc = \Rc\otimes_k B$ is a minimal resolution of $A =Q/IQ$. Clearly $\Sc$ has DG-algebra structure. We note that $\bx$ is a $Q$-regular sequence. Furthermore $\Sc/\bx \Sc$ has trivial multiplication.

\item
Let $(R, \n)$  be a local ring. Let $Q = R[X_{ij} \mid  1 \leq i \leq r -1, 1\leq j \leq r]_{(\n, X_{ij})}$ and $A = Q/I$ where $I$ is the ideal generated by maximal minors of the matrix $[X_{ij}]$ with $r \geq 3$. Note $\grade I = 2$, \cite[2.5]{BV}. Then the minimal resolution $\Rc$ of $A$ over $Q$ is given by the Hilbert-Burch theorem, \cite[1.4.17]{BH}. It is well-known that $\Rc$ has a DG-algebra resolution cf., \cite[2.1.2]{A}. Furthermore $\bx = X_{ij} \colon i,j \geq 1$ is a $Q$-regular sequence and $\Rc/\bx\Rc$ has trivial multiplication.

\item Let $\bg = g_1, \ldots, g_c$ be a $Q$-regular sequence. Let $I = (\bg)^m$ where $m \geq 2$. Then the minimal $Q$-resolution $\Rc$  of $A = Q/I$ has a DG-algebra resolution. Furthermore $\Rc/\bg \Rc$ has trivial multiplication, see \cite[3.4]{S}.
\end{enumerate}

Let us recall the definition of DG-algebras  satisfying our condition $(\dagger)$.
 \begin{definition}\label{body-defn}
 Suppose $\Rc$ the minimal $Q$-resolution (with $Q$-regular local) of $A$ has a DG-algebra structure. Let the residue field of $Q$ be $k$.  We say $\Rc$ satisfies $(\dagger)$ if we go mod a regular system of parameters of $Q$ the resulting algebra $\Sc$ has the following property.

 There is a basis $B$ of $\Sc_+$ and two distinct elements $u, v $ of $B$ such that
 \begin{enumerate}
  \item $$(\Sc_+)^2 \subseteq \bigoplus_{\stackrel{p \in B}{p \neq u, v}} k p.$$
  \item $\Sc_+ u = \Sc_+ v = 0$.
 \end{enumerate}
\end{definition}

\s\label{ex-dagger} We now give examples of DG-algebra's satisfying $(\dagger)$.
\begin{enumerate}
  \item The examples in \ref{ex-trivial}(1) trivially satisfies $(\dagger)$ if $\rank_Q \Rc \geq 3$.
  \item We show that \emph{many} \CM \ local rings $A$ with $\embdim(A) - \dim A = 3$ satisfies $(\dagger)$.
  Assume $A$ is a quotient of a regular local ring $Q$. Let $\Rc$ be the minimal $Q$-resolution of $A$. Recall $\Rc$ has a DG-algebra structure, cf., \cite[2.1.4]{A}. Let $\bx$ be a regular system of parameters of $Q$. The multiplication in $\Sc = \Rc/\bx \Rc$ has been completely classified in \cite[2.1]{AKM}. We recall the structure here when $A$ is NOT Gorenstein.
  Let $\Sc_1 = \{ e_1, , \cdots, e_m \}$ (with $m \geq 4$), $\Sc_3 = \{ g_1, \ldots, g_c\}$ (where $c = \type A \geq 2$) and $\Sc_2 = \{ f_1, \ldots, f_{c+m -1} \}$. Then the multiplication on $\Sc$ is classified under the following cases.
  \begin{enumerate}
    \item \textbf{TE:}  $f_1 = e_2e_3, f_2 = e_3e_1, f_3 = e_1e_2$.
    \item \textbf{B:}  $e_1e_2 = f_3, e_1f_1 = g_1, e_2f_2 =g_1$.
    \item \textbf{G(r):} $e_if_i =g_1$  where $1 \leq i \leq r$, (with $ r \geq 2$).
    \item \textbf{H(p,q):} $e_ie_{p+1} =f_i, 1\leq i \leq p$ and
    $e_{p+1}f_{p+j} = g_j,  1\leq j \leq q$.
  \end{enumerate}
  As $\Sc$ is skew commutative we have $e_ie_j = - e_je_i$ and $e_if_j = f_je_i$ for all $i,j$. All products of basis elements not listed above is zero.

  For \textbf{TE} we can choose $u = g_1, v = g_2$. For \textbf{B}  we can choose $u = f_3$, $ v = g_2$. For \textbf{G(r)} we can choose $u = g_2$, $v = f_{c + m -1}$.  In \textbf{H(p,q)}, if $p \leq m -2$ then we can choose $ u = e_m, v = f_{m + c -1}$. Also in this case if $q \leq c -1$ then we can choose $ u = g_c, v = f_{m + c -1}$.
\end{enumerate}
\begin{remark}
For specific examples of rings which satisfy various classes above see, \cite{Br}, \cite{CV}, \cite{CVW}, \cite{F}, \cite{Pa}, \cite{V} and \cite{V2}.

  Note in the above cases $A$ is Golod if and only if it is of the form \textbf{H(0,0)}, see \cite[1.4.3]{Aco}.
\end{remark}
The following observation is important.
\begin{observation}\label{obs}
Let $\Rc$ satisfy the hypotheses $(\dagger)$ as in \ref{body-defn}. Then the inclusion $ku \oplus kv \rt \Sc_+$ is split as DG $\Sc$-modules. The splitting is given by mapping $p \in B$ to zero
if $p \neq u, v$ and mapping $u, v$ identically to $u,v$ respectively. Our hypotheses ensure that this map is in fact $\Sc$-linear.
\end{observation}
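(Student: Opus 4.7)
The plan is to verify two things about the prescribed map $\pi \colon \Sc_+ \rt ku \oplus kv$, sending $u \mapsto u$, $v \mapsto v$, and $p \mapsto 0$ for every other basis element $p \in B$: (a) $\pi$ is a morphism of DG $\Sc$-modules, and (b) $\pi$ restricts to the identity on $ku \oplus kv$, so that composition with the inclusion yields a splitting. Part (b) is built into the definition of $\pi$, so all the content lies in (a).

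First I would dispose of the DG aspect. Since $\Rc$ is the \emph{minimal} $Q$-resolution over the regular local ring $(Q,\n)$, its differentials have matrix entries in $\n$. As $\Sc$ is obtained by going mod a regular system of parameters, i.e., by killing a generating set of $\n$, the induced differentials on $\Sc$ vanish identically. Consequently any $\Sc$-linear map on the underlying graded modules is automatically a DG morphism. Along the way one also checks that $ku \oplus kv$ really is an $\Sc$-submodule of $\Sc_+$: closure under $\Sc_0 = k$ is obvious, and closure under $\Sc_+$ is exactly condition (2) of $(\dagger)$.

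Next I would verify the $\Sc$-linearity of $\pi$. Decomposing $s = s_0 + s_+$ with $s_0 \in k$ and $s_+ \in \Sc_+$, the identity $\pi(sx) = s \pi(x)$ reduces for $s = s_0$ to $k$-linearity, which is built in. For $s = s_+$ and any $x \in \Sc_+$, condition (1) gives $s_+ x \in (\Sc_+)^2 \subseteq \bigoplus_{p \neq u, v} kp$, whence $\pi(s_+ x) = 0$; meanwhile $\pi(x) \in ku \oplus kv$ is annihilated by $\Sc_+$ thanks to condition (2), so $s_+ \pi(x) = 0$ as well. Both sides agree, and $\pi$ is $\Sc$-linear.

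The main, and essentially only, conceptual point is recognizing that conditions (1) and (2) of $(\dagger)$ are precisely tailored to make this retraction $\Sc$-linear: (1) ensures that products in $(\Sc_+)^2$ never involve $u$ or $v$, so $\pi$ kills them; (2) ensures that $u$ and $v$ are annihilated by $\Sc_+$, so the $\Sc_+$-action on the image of $\pi$ is trivial. In this sense the observation amounts to unwinding the definition of $(\dagger)$, once one notices that minimality of $\Rc$ renders the differentials on $\Sc$ zero.
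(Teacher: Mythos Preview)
Your proposal is correct and follows exactly the approach indicated in the paper: the observation itself merely names the retraction and asserts that the hypotheses make it $\Sc$-linear, and you have supplied the routine verification (conditions (1) and (2) of $(\dagger)$ handling the $\Sc_+$-action, minimality of $\Rc$ killing the differentials on $\Sc$). There is nothing to add.
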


\s \label{ex-third}
We now give examples of rings satisfying hypotheses as in Theorem \ref{third}.
\begin{enumerate}
  \item  Let $(R,\n)$ be a \CM \ local ring such that there exists a regular sequence $\by = y_1, \ldots, y_d$ such that the maximal ideal $\n$ of $R/\by R$ decomposes as $k^2 \oplus \n^\prime$. Examples of such rings are
      \begin{enumerate}
        \item $R$ has minimal multiplicity (and not Gorenstein) and $\by$ is a superficial sequence.
        \item $R$ has multiplicity $ = \embdim(R) - \dim R + 2$ and $\type(R) \geq 3$. Furthermore $\by$ is an $R$-superficial sequence.
        \item $R = S\ltimes k^2$ where $S$ is a Artin local ring (and $\by$ is empty).
      \end{enumerate}
  \item Let $R$ be as in (1). Then the construction in \ref{ex-trivial}(3) yields examples of rings satisfying hypotheses of Theorem \ref{third}.
  \item Let $R$ be as in (1). Assume $R$ contains a field. Then the construction in \ref{ex-trivial}(2) yields examples of rings satisfying hypotheses of Theorem \ref{third}.
  \item Let $R$ be as in (1)(a),(b). Set $Q = R$. Let $\bg$ be a subset of $\by$. Then $A = Q/(\bg)^m$ satisfies hypotheses of Theorem \ref{third}.

\end{enumerate}

\section{Proof of Theorem \ref{first}}
In this section we give a proof of Theorem \ref{first}. We restate it for the convenience of the reader.
\begin{theorem}\label{first-body}
Let $(Q, \n)$ be a regular local ring with residue field $k$ and let $I \subset \n^2$. Set $A = Q/I$. Suppose $\Rc$ the minimal $Q$-resolution of $A$ has a DG-algebra structure. Let the residue field of $Q$ be $k$.  Suppose $\Rc$ satisfies $(\dagger)$.  Let $M$ be an $A$-module with $\Ext_A^i(M, A) = 0$ for $ i \gg 0$. Then $\projdim_A M < \infty$.

\end{theorem}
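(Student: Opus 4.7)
The plan is to transfer the vanishing hypothesis $\Ext^i_A(M,A)=0$ for $i \gg 0$ to a vanishing statement over the DG-algebra $\Sc = \Rc/\bx\Rc$ (with $\bx$ a regular system of parameters of $Q$), and then exploit the structural property $(\dagger)$ through Observation \ref{obs} to force the Betti numbers of $M$ to drop to zero.

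First I would pick a minimal semi-free DG $\Rc$-resolution $F \rt M$. Since $\Rc \rt A$ is a quasi-isomorphism of DG $Q$-algebras, the restriction functor induces a derived equivalence $\D(A) \simeq \D(\Rc)$, so $\Ext^i_A(M, A) = \Ext^i_\Rc(F, \Rc)$ vanishes for $i \gg 0$. Let $\bx = x_1, \ldots, x_n$ be a regular system of parameters of $Q$. Because $\Rc^\natural$ is a free $Q$-module, $\bx$ is $\Rc^\natural$-regular. I would apply Lemma \ref{long-vanishing} iteratively: at the $j$-th step, the algebra $\Rc^{(j)} = \Rc/(x_1,\ldots,x_j)\Rc$ has underlying graded algebra free over $Q/(x_1,\ldots,x_j)$, so $x_{j+1}$ is regular on it, and the lemma (with the DG algebra as its own second argument) carries the vanishing forward. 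After $n$ steps I obtain $\Ext^i_\Sc(\widetilde{F}, \Sc) = 0$ for $i \gg 0$, where $\widetilde{F} = F/\bx F$ is a minimal semi-free DG $\Sc$-module whose rank in homological degree $n$ equals $\beta^A_n(M)$.

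Now I would exploit the structure of $\Sc$ via the short exact sequence $0 \rt \Sc_+ \rt \Sc \rt k \rt 0$ of DG $\Sc$-modules. The resulting long exact sequence, combined with the eventual vanishing of $\Ext^i_\Sc(\widetilde{F}, \Sc)$, gives
\[
\Ext^i_\Sc(\widetilde{F}, k) \cong \Ext^{i+1}_\Sc(\widetilde{F}, \Sc_+) \quad \text{for } i \gg 0.
\]
By Observation \ref{obs}, $\Sc_+$ splits as $ku \oplus kv \oplus W'$ in DG $\Sc$-modules, so the right-hand side contains $\Ext^{i+1}_\Sc(\widetilde{F}, ku) \oplus \Ext^{i+1}_\Sc(\widetilde{F}, kv)$ as a direct summand. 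Since $\widetilde{F}$ is minimal semi-free (its differential lies in $\Sc_+ \widetilde{F}$) and $\Sc_+$ annihilates each of $k$, $ku$, $kv$, the Hom complexes $\Hom_\Sc(\widetilde{F}, N)$ for $N \in \{k, ku, kv\}$ have zero differential. Counting basis elements by degree then gives $\dim_k \Ext^i_\Sc(\widetilde{F}, k) = \beta^A_i(M)$ and $\dim_k \Ext^i_\Sc(\widetilde{F}, ku) = \beta^A_{i+|u|}(M)$ (in the shift convention where $ku$ is $k$ placed in homological degree $|u|$). Substitution produces the crucial inequality
\[
\beta^A_i(M) \geq \beta^A_{i+1+|u|}(M) + \beta^A_{i+1+|v|}(M) \quad \text{for } i \gg 0.
\]

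I would finish with a telescoping sum. Write $a = 1+|u|$ and $b = 1+|v|$; both are $\geq 2$. The inequality rearranges to $\beta^A_{i+b}(M) \leq \beta^A_i(M) - \beta^A_{i+a}(M)$, and summing over $i = N, \ldots, M$ telescopes the right-hand side to give
\[
\sum_{i=N+b}^{M+b} \beta^A_i(M) \;\leq\; \sum_{i=N}^{N+a-1} \beta^A_i(M).
\]
Letting $M \rt \infty$, the tail $\sum_{j \geq N+b} \beta^A_j(M)$ is bounded by a finite quantity; as the Betti numbers are non-negative integers this forces $\beta^A_j(M) = 0$ for $j \gg 0$, i.e.\ $\projdim_A M < \infty$. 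The main obstacle I anticipate is in the third paragraph: getting the shift convention to yield $\beta^A_{i+|u|}(M)$ rather than $\beta^A_{i-|u|}(M)$ on the right-hand side of the key inequality. With the wrong sign the inequality would only propagate growth estimates backward and fail to force vanishing; with the correct sign the telescoping argument forces the Betti numbers to be summable and hence eventually zero.
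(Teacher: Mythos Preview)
Your proposal is correct and follows essentially the same route as the paper: pass to $\Sc=\Rc/\bx\Rc$ via Lemma~\ref{long-vanishing}, use the exact sequence $0\to\Sc_+\to\Sc\to k\to 0$ together with the splitting of Observation~\ref{obs} to obtain $\beta_i \ge \beta_{i+|u|+1}+\beta_{i+|v|+1}$ for $i\gg 0$ (your shift concern is unfounded---the direction is as you wrote). The paper's endgame is simpler than your telescoping sum: assuming $\projdim_A M=\infty$ one has $\beta_i>0$ for all $i>0$, and then $\beta_i>\beta_{i+|u|+1}>\beta_{i+2(|u|+1)}>\cdots$ is an infinite strictly decreasing sequence of positive integers, a contradiction.
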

\begin{proof}
 Let $\Rc$ be a the DG-algebra over $Q$ on the minimal free resolution of $A$ over $Q$. We have an equivalence of derived categories $\D(A) \rt \D(\Rc)$ given by the restriction functor. It follows that $\Ext^i_\Rc(M, \Rc) = 0$ for $i \gg 0$. Let $\F \rt M$ be a semi-free resolution of $M$ as a $\Rc$-module. We may assume $\F_ 0 = 0$ for $i > 0$. Note we may assume $\F_i$ is a finitely generated free $Q$-module.  Then $ \Ext^i_\Rc(\F, \Rc) = 0$ for $i \gg 0$.

Let $\bx = x_1, \ldots, x_e$ be a minimal set of generators of $\n$. Then $\bx$ is a $Q$ regular sequence. Set $\Sc = \Rc/\bx \Rc$ and $\G = \F/\bx \F$. Then by \ref{long-vanishing},  $\Ext^i_\Sc(\G, \Sc) = 0$ for $i \gg 0$. As noted in \ref{obs}, $\Sc_+ = k[-a] \oplus k[-b] \oplus W$ for some $a,b \geq 1$.
Set $\beta_i = \ell(\Ext^i_\Sc(\G, k))$. By \ref{mod-ext} we have $\Ext^i_\Sc(\G, k) = \Ext^i_\Rc(\F, k)$. By our  equivalence we have $\beta_i = \ell(\Ext^i_A(M,k))$ for $i > 0$ . Suppose if possible $\projdim_A M = \infty$. Then $\beta_i > 0$ for all $i > 0$.
As $\Sc$ has trivial differentials we have an exact sequence of DG $\Sc$-modules
  $0 \rt \Sc_+ \rt \Sc \rt k \rt 0$. So  we get for $i \gg 0$
\begin{align*}
  \beta_i  &\geq \beta_{i+a+1} +  \beta_{i+b+1} \\
           &> \beta_{i+a+1},\\
  &> \beta_{i + 2a + 2}> \cdots > \beta_{i + ma +m}.\\
\end{align*}
So we have a strictly decreasing infinite sequence of positive integers which is a contradiction.
So $\beta_i = 0$ for all $i \gg 0$. Thus $\projdim_A M < \infty$.
\end{proof}

\section{Proof of Theorem \ref{taka}}
In this section we prove  our version of a result due to Takahashi. We restate it for the convenience of the reader.
\begin{theorem}\label{taka-body}
Let $(Q,\n)$ be a regular local ring and let $A = Q/(h)$ with $h \in \n^2$. Let $\Fc$ be a thick subcategory of $\D^b(A)$ containing $A$. If $\Fc$ contains a complex $X$ with $X \notin \thick(A)$ then $k \in \Fc$.
\end{theorem}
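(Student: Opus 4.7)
Strategy: translate the problem to the DG algebra $\Rc = Q\langle e\rangle$ minimally resolving $A$ (so $|e|=1$, $de=h$), reduce modulo a regular system of parameters of $Q$ to the tiny DG algebra $\Sc = k\langle e\rangle$ (which has zero differential since $h\in\n^2$, and whose underlying algebra is the exterior algebra on one degree-$1$ generator), exploit the simplicity of the singularity category of $\Sc$ to locate $k$ in the appropriate thick subcategory, and then descend back.

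Setup. The augmentation $\Rc\rt A$ is a quasi-isomorphism of DG algebras; restriction along it gives a triangle equivalence $\D(A)\cong\D(\Rc)$ sending $A$ to $\Rc$. Identify $\Fc$ with the corresponding thick subcategory of $\D(\Rc)$: it contains $\Rc$ and some $X\notin\thick(\Rc)$. Choose a semi-free $\Rc$-resolution $F\rt X$ with each $F_n$ finitely generated over $Q$. Let $\bx=x_1,\ldots,x_e$ be a minimal generating set of $\n$; set $\Sc:=\Rc/\bx\Rc$ and $\ov{F}:=F/\bx F$. By Remark~\ref{semi-mod}, $\ov{F}$ is semi-free over $\Sc$, and by Corollary~\ref{mod-regular}, $\Ext^i_\Rc(F,N)=\Ext^i_\Sc(\ov{F},N)$ for every DG $\Sc$-module $N$. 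Since $X\notin\thick(A)$, we have $\Ext^i_A(X,k)\neq 0$ for $i\gg 0$; transferring, $\Ext^i_\Sc(\ov{F},k)\neq 0$ for $i\gg 0$, so $\ov{F}\notin\thick_{\D(\Sc)}(\Sc)$.

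Key step: $k\in\thick_{\D(\Sc)}(\Sc,\ov{F})$. Because $\Sc$ has zero differential, $\D(\Sc)$ is the derived category of graded modules over the exterior algebra $\Lambda=k[e]/(e^2)$. $\Lambda$ is a graded $0$-dimensional Gorenstein ring whose indecomposable graded modules are the shifts of $\Lambda$ and of $k$; consequently the singularity category $\D(\Sc)/\thick(\Sc)$ has, up to shift, the single indecomposable $k$, and every nonzero thick subcategory of it equals the whole thing. Hence $k$ lies in $\thick_{\D(\Sc)}(\Sc,\ov{F})$. Concretely, the short exact sequence $0\rt\Sc_+\rt\Sc\rt k\rt 0$, in which $\Sc_+\cong k[-1]$ as DG $\Sc$-modules, supplies a triangle from which $k$ can be extracted by iterated cones and summands once a non-perfect object is available.

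Descent. The restriction functor $\D(\Sc)\rt\D(\Rc)$ along $\Rc\rt\Sc$ is triangulated. Since $\bx$ is $Q$-regular and $\Rc$ is $Q$-free, $\Sc\simeq\Rc\otimes_Q K(\bx;Q)$ as DG $\Rc$-modules, which is a bounded complex of finitely generated free $\Rc$-modules; thus $\Sc\in\thick_{\D(\Rc)}(\Rc)$. Similarly $\ov{F}\simeq F\otimes_Q K(\bx;Q)\in\thick_{\D(\Rc)}(F)$, being built from $F$ by finitely many shifts and cones. Therefore restriction maps $\thick_{\D(\Sc)}(\Sc,\ov{F})$ into $\thick_{\D(\Rc)}(\Rc,F)=\Fc$, and since $k$ restricts to $k$ as a DG $\Rc$-module, we conclude $k\in\Fc$. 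The main obstacle is making the Key Step rigorous at the DG level: even though $\Sc$ has zero differential, general DG $\Sc$-modules carry nontrivial differentials and need not be formal, so one must either verify the singularity-category classification directly for DG $\Sc$-modules or truncate $\ov{F}$ to the graded-module setting without losing the Ext-nonvanishing of Step~2.
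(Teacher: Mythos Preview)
Your overall strategy---transfer to $\D(\Rc)$, reduce modulo a regular system of parameters to the exterior algebra $\Sc = k\langle e\rangle$ with zero differential, locate $k$ in the thick subcategory of $\D(\Sc)$ generated by $\Sc$ and $\ov{F}$, then descend via restriction---is sound in outline, and your descent step is correct. The genuine gap is the Key Step, which you yourself flag. The sentence ``$\D(\Sc)$ is the derived category of graded modules over the exterior algebra $\Lambda$'' is at best a tautology (DG $\Sc$-modules are by definition differential graded $\Lambda$-modules) and does not justify the next inference: knowing the indecomposable graded $\Lambda$-modules does \emph{not} classify objects of $\D(\Sc)$, because a DG $\Sc$-module carries its own differential and need not be formal. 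The assertion that $\D(\Sc)/\thick(\Sc)$ has, up to shift, the single indecomposable $k$ is true, but proving it is essentially the zero-dimensional case of the theorem you are trying to establish; you have not supplied an argument, and the sentence about ``iterated cones and summands'' is not one.

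The paper circumvents this by arranging that the object to be analyzed over $\Tc = k\langle f\rangle$ is extremely small. Instead of working with a (typically unbounded) semi-free resolution $\ov{F}$, it first replaces $X$ by a maximal \CM\ module $N$ of infinite projective dimension (standard, since $\Fc$ is thick and contains $A$), descends only to dimension one so that $\ov{Q}$ is a DVR, and takes a first syzygy $K$ of $\ov{N}$ there; because $\ov{Q}$ is a DVR and $H_*(K)$ has finite length, $K$ is a two-term complex of free modules of equal rank. Going modulo the uniformizer then yields a DG $\Tc$-module $\ov{K}$ concentrated in degrees $0$ and $1$, which can be dissected by hand: one checks that the multiplication map $\ov{K}_0 \xrightarrow{f} \ov{K}_1$ cannot surject onto $Z_1(\ov{K})$ (else $\ov{K}\simeq\Tc^l$ in $\D(\Tc)$, forcing $\Ext^i_\Rc(N,k)=0$ for $i\gg 0$), and then writes down explicit $\Tc$-linear maps $k[-1]\to\ov{K}\to k[-1]$ composing to the identity. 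So $k$ is a direct summand of an object already in $\Gc$. Your approach trades this concrete two-term linear algebra for an abstract classification you do not prove; the paper's reduction to a two-term complex is exactly the device that makes the Key Step tractable.
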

\begin{proof}
We note that $\Fc$ will contain a maximal \CM \ $A$-module $N$ with $\projdim_A N = \infty$.
  Let $\Rc = K(h) \colon 0 \rt Q \xrightarrow{h} Q \rt 0$ be the Koszul complex. We have an obvious map $\Rc \rt A$ which is a quasi-isomorphism.  So we have an equivalence of derived categories $\D(A) \rt \D(\Rc)$ given by the restriction functor. Let $\Gc$ be the essential image of $\Fc$ under this correspondence. It follows that $N, \Rc \in \Gc$. Therefore
  \begin{equation*}
 \text{For all $i \geq 1$} \quad   \Ext^i_\Rc(N, k) = \Ext^i_A(N,k) \neq 0. \tag{$\dagger$}
  \end{equation*}
   Let $\bx = x_1, \ldots, x_d \in \n \setminus \n^2$ be  $Q \oplus A\oplus N$-regular sequence.
      Set $\Sc = \Rc/\bx \Rc$ and $\ov{N} = N/\bx N$. As $\Ext_\Rc^i(N, \Rc) = \Ext^i_A(N, A) = 0$ for $i > 0$,  it follows from \ref{long-vanishing} and \ref{mod-regular} that   $\Ext^i_\Sc(\ov{N},  \Sc) = 0$ for $i \gg 0$.
      We note that $\Sc, \ov{N} \in \Gc$.
       Let $\ov{Q} = Q/\bx Q$ and $\ov{A} = A/\bx A$. Notice $\ov{Q}$ is a DVR, say the maximal ideal of $\ov{Q}$ is $(\pi)$.
  Let $g \colon \Sc^a \rt \ov{N}$ be a epimorphism where $a = \mu(\ov{N})$. Set $ K = \ker g$. Then $K$ is a complex $0 \rt K_1 \rt K_0 \rt 0$ with $K_i$ a free $Q$-module.
  Furthermore $$H_*(K) = H_0(K) = \Syz^{\ov{A}}_1(\ov{N}) = U \neq 0 \quad \text{as} \  \projdim_{\ov{A}} \ov{N} = \infty.$$ As $U$ has finite length as $\ov{Q}$-module it follows that
   $\rank_{\ov{Q}} K_ 0  = \rank_{\ov{Q}} K_ 1$. We note that $K \in \Gc$.

  Let $\Tc = \Sc/\pi \Sc$, $\ov{K} = K/\pi K$.
  We note that $\Tc$ is the complex $0 \rt kf \xrightarrow{0} k1_\Tc \rt 0$. We also have $H_0(\ov{K}) = U/\pi U$ and
  $H_1(\ov{K}) = (0 \colon_{U} \pi)$. Observe $\ov{K} \in \Gc$.
   We note that $H_0(\ov{K})$ and $H_1(\ov{K})$ have the same \emph{non-zero} dimension as $k$-vector spaces.
     Also note that $\ov{K} \in \Gc$.

   Let $k[-1]_1 $ have $k$-basis $e$.  Let $u \in \ov{K_1}$ be a cycle. Consider the chain map
   $\alpha_u\colon k[-1] \rt \ov{K}$  defined by sending $e$ to $u$. We note that $\alpha_u$ is also a map of DG $\Tc$-modules.

    We consider the map
    $\eta \colon \ov{K_0} \xrightarrow{f} \ov{K_1}$. Clearly $\image \eta \subseteq Z_1(\ov{K})$. We make the following:

    \emph{Claim:} $\image(\eta) \neq Z_1(\ov{K})$.

    Assume the claim for the time being. Say $\ov{K_1}$ have a $k$-basis $v_1, \ldots,v_s, w_1, \ldots w_c$ where $\image(\eta) = kv_1 \oplus \cdots \oplus k v_s$ and $w_1 $ is a cycle. Consider
     the map $\beta \colon \ov{K} \rt k[-1]e$ which maps $\ov{K_0}$ to zero. On $\ov{K_1}$ is defined as $\beta(v_j) = 0$, $\beta(w_1) = e$ and $\beta(w_j) = 0$ for $j > 1$. Clearly $\beta$ is a chain map. By our choice of basis of $\ov{K_1}$ it also follows that $\beta$ is a map of DG $\Tc$-modules. Observe $\beta\circ \alpha_{w_1}$ is the identity map on $k[-1]e$.
     It follows that there exists a DG $\Tc$-module $L$ such that $\ov{K} = k[-1] \oplus L$ as DG $\Tc$-modules (and so also  as DG $\Rc$-modules). So $k \in \Gc$. This implies $k \in \Fc$.

It remains to prove the claim.

We first note that $\ov{K} \ncong \Tc^a $  in $\D(\Tc)$ for some $a > 0$. Suppose this is so. Then
$$\Ext^i_\Tc(\ov{K}, k) \cong \Ext^i_\Tc(\Tc^a, k) = \Hom_{K(\Tc)}(\Tc^a , k[-i]) = 0, \quad \text{for} \ i \geq 2.$$
This will imply by \ref{mod-regular} that   $\Ext^i_\Sc(K, k) = 0$  for $i \geq 2$. Therefore $\Ext^i_\Sc(\ov{N}, k) = 0$ for $i \gg 0$. By \ref{mod-regular} again $\Ext^i_\Rc(N, k)  = 0$ for $i \gg 0$. This contradicts $(\dagger)$.

Let $\{ u_1, \ldots, u_l, v_1, \ldots, v_s \}$ be a $k$-basis of $\ov{K}_0$ such that  the images of $u_1, \ldots, u_l$ in $H_0(\ov{K})$ is a $k$-basis of $H_0(\ov{K})$.
We note that if $w \in \ov{K}_0$ then $w = u + \partial(t)$ with $u$ in the $k$-span of $u_1, \ldots,u_l$ and $v \in \ov{K}_1$.

Set $E = \Tc^l$. Consider the DG $\Tc$-map $ \psi \colon E \rt \ov{K} $ which maps the standard basis of $E_0$ to $u_1, \ldots, u_l$. Notice $H_0(\psi)$ is an isomorphism.

Suppose if possible $\image(\eta) = Z_1(\ov{K})$. Let $y \in Z_1(\ov{K})$. Then
$t = f\xi$ where $\xi \in \ov{K}_0$.  Write $\xi = u + \partial(t)$ with $u$ in the $k$-span of $u_1, \ldots,u_l$ and $v \in \ov{K}_1$.
So we have
$$t = fu + f\partial(v) = fu -\partial(fv) = fu = f\psi(\theta) = \psi(-f \theta), $$
for some $\theta \in E_0$. It follows that $H_1(\psi) $ is surjective. As the $k$-dimension of both $H_1(E) = E_1$ and $H_1(\ov{K})$ is both $l$ we get that $H_1(\psi)$ is an isomorphism. Thus $\psi$ is a quasi-isomorphism. $\ov{K} \cong  \Tc^l$ in $\D(\Tc)$. As shown earlier this is not possible. So our Claim follows.
\end{proof}

\section{Proof of Theorem \ref{third}}
In this section we give
\begin{proof}[Proof of Theorem \ref{third}]
Set $\Sc = \Rc/(f_1, \ldots, f_c)\Rc$.
Suppose if possible \\ $\projdim_A M = \infty$.   Set $$B = Q/(f_1, \ldots, f_c) = A/(f_1, \ldots, f_c), \quad \text{as $\Sc$ has trivial differentials.} $$
We have an equivalence of derived categories $\D(A) \rt \D(\Rc)$ given by the restriction functor. It follows that $\Ext^i_\Rc(M, \Rc) = 0$ for $i \gg 0$. Let $\F \rt M$ be a semi-free resolution of $M$ as a $\Rc$-module. We may assume $\F_ 0 = 0$ for $i > 0$. Note we may assume $\F_i$ is a finitely generated free $Q$-module.  Then $ \Ext^i_\Rc(\F, \Rc) = 0$ for $i \gg 0$.

Set  $\G  = \F/(f_1, \ldots, f_c)\F$. Then by \ref{long-vanishing},  $\Ext^i_\Sc(\G, \Sc) = 0$ for $i \gg 0$.
.
 Notice by our assumption $\rank_B \Sc \geq 3$. As $\Sc_+$ has trivial multiplication we get, $\Sc_+ = \bigoplus_{i =1}^{m}B[-a_i] $ for some $a_i \geq 1$.
As $\Sc$ has trivial differentials  we have an exact sequence
 $ 0 \rt \Sc_+ \rt \Sc \rt B \rt 0.$ of DG $\Sc$-modules. Therefore we get for $j \gg 0$ (say $j \geq j_0$),
 \begin{align*}
   \Ext^j_\Sc(\G, B) &= \bigoplus_{i = 1}^{m}\Ext_\Sc^{j+a_i}(\G, B)  \\
    &= \Ext^{j+1}_\Sc(\G, B)^{m_1} \oplus \Ext^{j+1}_\Sc(\G, \Sc_{\geq 2}).
 \end{align*}

Claim: $\Ext^j_\Sc(\G, B) = 0$ for all $ j \gg 0$. \\
Suppose if possible that the Claim is not true.
Then $\Ext^{n_l}_\Sc(\G, B) \neq  0$  where $n_l \rt \infty$.
Then notice by the above equality we obtain that
$\Ext^j_\Sc(\G, B) \neq 0$ for all $ j \gg 0$.

Note that $\Ext^j_\Sc(\G, B) $ are finitely generated $Q$-modules. Let $\mu(-)$ denote the number of minimal generators of a finitely generated $Q$-module. Furthermore as \\  $\rank_B \Sc_+ \geq 2$ we obtain that
$\mu(\Ext^j_\Sc(\G, B)) > \mu(\Ext^{j+1}_\Sc(\G, B))$ for all $j \geq j_0$. So we have a strictly decreasing sequence of positive integers, a contradiction. Thus  $\Ext^j_\Sc(\G, B) = 0$ for all $ j \gg 0$.

By \ref{mod-ext} it follows that   $\Ext^j_\Rc(\F, B) = 0$ for all $ j \gg 0$. By our equivalence we obtain $\Ext^j_A(M, B) = 0$ for $j \gg 0$.
 As $g_1, \ldots, g_r$ is a $B$-regular sequence we obtain $\Ext^j_A(M, C) =0$ for $j \gg 0$ (here $C = B/(g_1, \ldots, g_r)$. By assumption the maximal ideal $\n$ of $C$ decomposes as $k^2 \oplus \n^\prime$. By the exact sequence $ 0 \rt \n \rt C \rt k \rt 0$ we obtain that
 $\Ext_A^{j+1}(M,k)^2$ is a direct summand of $\Ext_A^j(M,k)$ for all $j \gg 0$.   As $\projdim_A M = \infty $ we get that $\beta_j(M) > \beta_{j+1}(M)$ for all $j \gg 0$. So again we get a strictly decreasing sequence of positive integers, a contradiction.

 Thus $\projdim_A M < \infty$.
\end{proof}
\section{proof of Theorems \ref{dagger-bass} and \ref{third-bass}}
We first give
\begin{proof}[Proof of Theorem \ref{dagger-bass}]
Let $\Rc$ be a the DG-algebra over $Q$ on the minimal free resolution of $A$ over $Q$. We have an equivalence of derived categories $\D(A) \rt \D(\Rc)$ given by the restriction functor. It follows that $\Ext^i_\Rc(\Rc, M )  \cong \Ext^i_A(A, M)= 0$ for $i \geq 1.$   We also have $\mu_i = \ell(\Ext^i_A(k, M)) = \ell(\Ext^i_\Rc(k, M))$ for $i > 0$.

Let $\bx = x_1, \ldots, x_e$ be a minimal set of generators of $\n$. Then $\bx$ is a $Q$ regular sequence. Set $\Sc = \Rc/\bx \Rc$. The exact sequence
$0 \rt \Rc \xrightarrow{x_1} \Rc \rt \Rc/x_1\Rc \rt 0$ yields a triangle in $\D(\Rc)$
$$ \Rc \rt \Rc \rt \Rc/x_1\Rc \rt \Rc[-1].$$ So we obtain $\Ext^i_\Rc(\Rc/x_1\Rc, M ) = 0$ for $i \gg 0$. Iterating we obtain $\Ext^i_\Rc(\Sc, M )  = 0 $ for $i \gg 0$.

 As noted in \ref{obs}, $\Sc_+ = k[-a] \oplus k[-b] \oplus W$ for some $a,b \geq 1$.  As $\Sc$ has trivial differentials we have an  exact sequence $ 0 \rt \Sc_+ \rt \Sc \rt k \rt 0$ of DG $\Sc$-modules (and so of DG $\Rc$-modules). This yields the following  triangle in $\D(\Rc) \colon$
 $ \Sc_+ \rt \Sc \rt k \rt \Sc_+[-1]$. So we obtain for $i \gg 0$
$$ \mu_i \geq \mu_{i-a-1} +  \mu_{i-b-1} > \mu_{i-a-1}.$$
It follows that the sequence $\{\mu_i \}$ is unbounded. This proves the first assertion.

Now assume that $A$ is \CM. As $A$ is a quotient of a regular local ring it follows that $A$ has a canonical module $\omega$. Let $(-)^\vee = \Hom_A(-, \omega)$. L et $M$ be an $A$-module with $\projdim_A M = \infty$. Let
$N = \Syz^A_{\dim A}(M)$. Then $N$ is maximal \CM. Furthermore it is readily verified that $\mu_{i+ \dim A}(\m, N^\vee) =\beta_i(N)$. The second assertion follows.
\end{proof}
We now give
\begin{proof}[Proof of Theorem \ref{third-bass}]
Let $\Rc$ be a the DG-algebra over $Q$ on the minimal free resolution of $A$ over $Q$. We have an equivalence of derived categories $\D(A) \rt \D(\Rc)$ given by the restriction functor. It follows that $\Ext^i_\Rc(\Rc, M )  \cong \Ext^i_A(A, M)= 0$ for $i \geq 1.$   We also have $\mu_i = \ell(\Ext^i_A(k, M)) = \ell(\Ext^i_\Rc(k, M))$ for $i > 0$.

We have a $Q$-regular sequence $f_1, \ldots, f_c, g_1, \ldots, g_r$. Let \\ $\Tc = \Rc/(f_1, \ldots, f_c, g_1, \ldots, g_r)\Rc$.  Set $C = Q/(f_1, \ldots, f_r, g_1, \ldots, g_r)$. Then as shown in the proof of Theorem \ref{dagger-bass} we get   $\Ext^i_\Rc(\Tc, M )  = 0 $ for $i \gg 0$.

Let $\Tc$ be
$$ 0 \rt T_l \rt T_{l-1} \rt \cdots \rt T_0 \rt 0, \quad \text{where $T_i$ are free $C$-modules.}$$
Consider the following sub-complex  of $\Tc$;
$$\Gc \colon  0 \rt \m T_l \rt T_{l-1} \rt \cdots \rt T_0 \rt 0.$$
 As $\Tc$ has trivial multiplication ( i.e., $(\Tc_+)^2 = 0$), we note that $\Gc$ is infact a DG-submodule of $\Tc$.
Let $T_l = C^a$. We have a short exact sequence
$$ (*) \quad 0 \rt \Gc \rt \Tc \rt k^a[-l] \rt 0,$$
of DG-$\Tc$-modules. By our assumption $\m T_l = k^{2a} \oplus W$. As $\Tc$ has trivial multiplication and zero differentials it follows that the inclusion $k^{2a}[-l] \rt \Gc$ is split as DG $\Tc$-modules. By (*) we have a triangle $ \Gc \rt \Tc \rt k^a[-l] \rt \Gc[-1]$ in $\D(\Rc)$. It follows that for all $i \gg 0$
we have
$2a\mu_{i-1} \leq a \mu_i$.
It follows that $\plex(M) \geq 2$, see \cite[3.36]{RW}.
 This proves the first assertion.

Now assume that $A$ is \CM \ with a canonical module $\omega$. Let $(-)^\vee = \Hom_A(-, \omega)$. Let $M$ be an $A$-module with $\projdim_A M = \infty$. Let \\
$N = \Syz^A_{\dim A}(M)$. Then $N$ is maximal \CM. Furthermore it is readily verified that
 $\mu_{i+ \dim A}(\m, N^\vee) =\beta_i(N)$. The second assertion follows.

\end{proof}

\section*{Acknowledgements}
I thank Saeed Nasseh for answering some of my queries regarding DG algebras and modules.

\end{document}